\newcommand{\R}{\mathbb{R}}
\newcommand{\N}{\mathbb{N}}
\newcommand{\cM}{\mathcal{M}}
\newcommand{\wh}{\widehat}
\newcommand{\crt}{\operatorname{crt}}
\newcommand{\id}{\operatorname{id}}
\newcommand{\mob}{\operatorname{Mob}}
\newcommand{\set}[2]{\{#1:\,\text{#2}\}}
\renewcommand{\phi}{\varphi}
\renewcommand{\epsilon}{\varepsilon}
\theoremstyle{plain}
\newtheorem{theorem}{Theorem}%[subsection]
\newtheorem{corollary}{Corollary}%[subsection]
\newtheorem{lemma}{Lemma}%[subsection]
\newtheorem{proposition}{Proposition}%[subsection]
\theoremstyle{definition}
\theoremstyle{remark}
 \newtheoremstyle{break}% name
   {9pt}%      Space above, empty = `usual value'
   {9pt}%      Space below
   {\itshape}% Body font
   {}%         Indent amount (empty = no indent, \parindent = para indent)
   {\bfseries}% Thm head font
   {.}%        Punctuation after thm head
   {\newline}% Space after thm head: \newline = linebreak
   {}%         Thm head spec
 \theoremstyle{break}
\title{Ptolemy spaces with strong inversions}
\author{Alexander Smirnov\footnote{Supported by RFFI Grant
11-01-00302-a}}
\begin{document}

\date{}
\maketitle

\begin{abstract}
We prove that a compact Ptolemy space with many strong inversions 
that contains a Ptolemy circle is M\"obius equivalent to an extended
Euclidean space. 
\end{abstract}

\section{Introduction}
This paper was motivated by the works~\cite{BS1} and \cite{BS2} of 
S.~Buyalo and V.~Schroeder giving a M\"obius characterization of the boundary
at infinity of the rank one symmetric spaces of noncompact type.
Their characterization uses the notion of a {\em space inversion}, 
w.r.t. distinct
$\omega$, $\omega'\in X$
and a metric sphere 
$S\subset X$
between
$\omega$, $\omega'$,
which is  a M\"obius automorphism
$\phi=\phi_{\omega,\omega',S}:X\to X$
such that
\begin{itemize}
 \item[(1)] $\phi$
is an involution,
$\phi^2=\id$,
without fixed points;
 \item[(2)] $\phi(\omega)=\omega'$ (and thus
$\phi(\omega')=\omega$);
 \item[(3)] $\phi$
preserves
$S$,
$\phi(S)=S$;
 \item[(4)] $\phi(\sigma)=\sigma$
for any Ptolemy circle
$\sigma\subset X$
through
$\omega$, $\omega'$.
\end{itemize}

Recall that however a classical inversion of an Euclidean space
$\R^n$
with respect to a sphere
$S\subset\R^n$
fixes
$S$
pointwise. In this paper we impose on an s-inversion a stronger condition
that $\phi$
preserves
$S$
pointwise, 
$\phi(x)=x$
for every 
$x\in S$, and call it {\em strong s-inversion}.
We study Ptolemy spaces with two following properties. 

\begin{itemize}
 \item[(E)] Existence: there is at least one Ptolemy circle in $X$.
 \item[(sI)] strong Inversion: for any distinct $\omega, \omega'\in X$ and 
any metric sphere $S\subset X$ 
between $\omega, \omega'$ there is a strong space inversion 
$\phi_{\omega,\omega',S}\colon X\to X$ 
w.r.t. $\omega, \omega'$ and S.
\end{itemize}
 
Our main result is the proof of the following theorem. 

\begin{theorem}\label{main:thm} Let
$X$
be a compact Ptolemy space with properties 
(E) and (sI). Then
$X$
is M\"obius equivalent to the extended Euclidean space
$\wh\R^n=\R^n\cup\{\infty\}$
for some
$n\ge 1$.
\end{theorem}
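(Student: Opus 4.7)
The plan is to fix a basepoint $\omega\in X$ and show that the punctured space $X_\omega=X\setminus\{\omega\}$, equipped with the M\"obius-inverted metric $d_\omega(x,y)=|xy|/(|x\omega|\cdot|y\omega|)$, is isometric to Euclidean $\R^n$ for some $n\ge 1$. The Ptolemy inequality on $X$ makes $d_\omega$ into a genuine metric, and compactness of $X$ then delivers the desired M\"obius equivalence $X\cong\wh\R^n$ by treating $\omega$ as the point at infinity. Throughout, the key fact is that M\"obius automorphisms of $X$ fixing $\omega$ act as $d_\omega$-isometries on $X_\omega$.

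The crucial observation is that a strong s-inversion $\phi_{\omega_1,\omega_2,S}$ whose sphere $S$ passes through $\omega$ fixes $\omega$, and so descends to a $d_\omega$-isometry of $X_\omega$ that pointwise fixes the ``hyperplane'' $S\setminus\{\omega\}$ while swapping $\omega_1,\omega_2$ across it---exactly a Euclidean-style reflection. The first technical step is to show that for any pair of distinct points $\omega_1,\omega_2\in X_\omega$ there exists such a metric sphere through $\omega$ between them, playing the role of the perpendicular bisector. A continuity/intermediate-value argument on the radii of metric spheres centered at $\omega_1$, supported by compactness of $X$, should supply this; (sI) then produces the corresponding reflection.

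Starting from the Ptolemy circle $\sigma$ provided by (E), one may arrange by a preliminary M\"obius map that $\omega\in\sigma$, so that $\sigma\setminus\{\omega\}$ is an isometric copy of $\R$ inside $X_\omega$. Reflections across hyperplanes through points of $\sigma$ then generate further isometric lines, and iteration produces an ascending family of Euclidean flats of increasing dimension. Two-point transitivity of the reflection group forces these flats to exhaust $X_\omega$, while the Ptolemy inequality---known to characterize inner-product spaces among normed spaces---forces the $d_\omega$-metric to satisfy the parallelogram identity, so that $X_\omega$ is a genuine Euclidean inner-product space.

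The principal obstacle is dimension control. Finiteness of $n$ must be extracted from compactness of $X$, since the one-point compactification of an infinite-dimensional Hilbert space is not compact, and one has to rule this out uniformly across the inductive construction of flats. A secondary delicacy is verifying the existence of ``perpendicular-bisector'' spheres in full generality, i.e. understanding precisely which metric spheres count as ``between'' two given points in the sense demanded by (sI); an approximation or density argument may be needed to pass from the configurations handed to us directly by (sI) to a truly arbitrary pair $\omega_1,\omega_2$.
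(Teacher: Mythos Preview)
Your central observation is sound: a strong s-inversion $\phi_{\omega_1,\omega_2,S}$ with $\omega\in S$ fixes $\omega$ and so restricts to a similarity of $X_\omega$; being an involution, its ratio squares to $1$ and it is an isometry, which in the model $\wh\R^n$ is exactly the reflection in the perpendicular bisector of $\omega_1\omega_2$. Two corrections, though. First, M\"obius automorphisms fixing $\omega$ are in general only \emph{similarities} of $X_\omega$, not isometries (homotheties are the obvious counterexample); you need the involution property to force the ratio to be $1$. Second, the existence of a bisecting sphere is not a delicacy at all: by definition a metric sphere between $\omega_1,\omega_2$ is any $S_r^d(\omega_1)$ for a metric $d\in\cM$ with $\omega_2$ infinitely remote, so one simply takes $r=d(\omega_1,\omega)$.

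The genuine gap is everything after this. You assert that reflections ``generate further isometric lines,'' that ``iteration produces an ascending family of Euclidean flats,'' and that ``two-point transitivity forces these flats to exhaust $X_\omega$,'' but none of this is substantiated, and it is not even clear what a ``flat'' means before the space is known to be linear. To invoke Schoenberg you must first exhibit $X_\omega$ as a normed vector space whose norm induces the given metric, and your outline contains no mechanism for defining addition. The paper closes exactly this gap by a different route: it constructs \emph{horosphere symmetries} as limits of s-inversions along a Ptolemy line, uses them to establish Busemann flatness, defines translations along each line as compositions of two such symmetries, proves that each horosphere is again a compact Ptolemy space with (E) and (sI), and then inductively splits $X_\omega$ as a product $\ell_0\times\cdots\times\ell_N$ of Ptolemy lines (finiteness of $N$ coming from compactness, as you anticipated). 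The product supplies coordinates, the translations supply addition, the homotheties supply scalar multiplication, and only then is Schoenberg applicable. Your bisector reflections may well generate the same isometry group, but the passage from ``many reflections'' to ``vector space'' is the heart of the argument and is missing from your proposal.
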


Another M\"obius characterization of
$\wh\R^n$
is obtained in \cite{FS}: {\em a compact Ptolemy space
$X$
is M\"obius equivalent to
$\wh\R^n$
if and only if through any three points in
$X$
there is a Ptolemy circle.}

Despite the differences in the definition of s-inversions and strong s-inversions, 
some properties of studied spaces hold in both cases.
Thus the definitions of homotheties and shifts, as well as  
Lemmas \ref{lem:unique_line}, 
\ref{lem:busparallel_sublinear}, 
\ref{lem:shift_busemann_parallel} 
are originally presented in \cite{BS1}. 
The significant differences between the classes of such spaces 
arise when we consider a symmetry w.r.t. a horosphere. 
In general, if we assume only the existence of s-inversions there is no reason 
that such a symmetry exist. 

{\em Acknowledgements}. 
I would like to thank Sergei Buyalo for many useful
advices and attention to this paper.

\section{Basic definitions}

\subsection{M\"obius structures}

In this section we will follow definitions from \cite{BS1}.  
Namely, fix a set $X$ and consider {\em extended} metrics on
$X$
for which existence of an {\em infinitely remote} point
$\omega\in X$
is allowed, that is,
$d(x,\omega)=\infty$
for all
$x\in X$, $x\neq\omega$. 
We always assume that such a point is unique if exists, and that
$d(\omega,\omega)=0$.

A quadruple
$Q=(x,y,z,u)$
of points in a set
$X$
is said to be {\em admissible} if no entry occurs three or
four times in 
$Q$.
Two metrics 
$d$, $d'$
on 
$X$ 
are {\em M\"obius equivalent} if for any admissible quadruple
$Q=(x,y,z,u)\subset X$
the respective {\em cross-ratio triples} coincide,
$\crt_d(Q)=\crt_{d'}(Q)$,
where
$$\crt_d(Q)=(d(x,y)d(z,u):d(x,z)d(y,u):d(x,u)d(y,z))\in\R P^2.$$
If 
$\infty$ 
occurs once in 
$Q$, 
say 
$u=\infty$,
then
$\crt_d(x,y,z,\infty)=(d(x,y):d(x,z):d(y,z))$.
If 
$\infty$ 
occurs twice, say 
$z=u=\infty$, 
then
$\crt_d(x,y,\infty,\infty)=(0:1:1)$.

A {\em M\"obius structure} on a set
$X$
is a class 
$\cM=\cM(X)$
of metrics on
$X$
which are pairwise M\"obius equivalent.

The topology considered on 
$(X,d)$ 
is the topology with the basis consisting of all open distance balls 
$B_r(x)$
around points in 
$x\in X_\omega$ 
and the complements $X\setminus D$ 
of all closed distance balls 
$D=\overline{B}_r(x)$. 
M\"obius equivalent metrics define
the same topology on
$X$.
When a M\"obius structure 
$\cM$
on
$X$
is fixed, we say that
$(X,\cM)$
or simply
$X$
is a {\em M\"obius space.}

A map
$f:X\to X'$
between two M\"obius spaces
is called {\em M\"obius}, if 
$f$ 
is injective and for all admissible quadruples
$Q\subset X$
$$\crt(f(Q))=\crt(Q),$$
where the cross-ratio triples are taken with respect to
some (and hence any) metric of the M\"obius structures
of
$X$, $X'$.
M\"obius maps are continuous. If a M\"obius map
$f:X\to X'$
is bijective, then 
$f^{-1}$
is M\"obius,
$f$
is homeomorphism, and the M\"obius
spaces
$X$, $X'$
are said to be {\em M\"obius equivalent}. 

We note that if two M\"obius equivalent 
metrics have the same infinitely remote point, 
then they are homothetic, see e.g. \cite{BS1, FS}.

A classical example of a M\"obius space is the extended
$\wh\R^n=\R^n\cup\infty=S^n$, $n\ge 1$,
where the M\"obius structure is generated by some extended
Euclidean metric on
$\wh\R^n$,
and
$\R^n\cup\infty$
is identified with the unit sphere
$S^n\subset\R^{n+1}$
via the stereographic projection.

\subsection{Ptolemy spaces}

A M\"obius space
$X$
is called a {\em Ptolemy space}, if it satisfies the
Ptolemy property, that is, for all admissible quadruples  
$Q\subset X$
the entries of the respective cross-ratio triple
$\crt(Q)\in\R P^2$
satisfy the triangle inequality.

The Ptolemy property is equivalent to that the M\"obius structure
$\cM$ 
of
$X$ 
is invariant under metric inversions, or in other words,
$\cM$
is Ptolemy if and only if for all
$z\in X$ 
there exists a metric
$d_z\in\cM$ 
with infinitely remote point
$z$. 

Recall that the metric inversion (or m-inversion for brevity) 
of a metric
$d\in\cM(X)$
w.r.t.
$z\in X\setminus\omega$
($\omega$ 
is infinitely remote for 
$d$)
of radius
$r>0$
is a function
$d_z(x,y)=\frac{r^2d(x,y)}{d(z,x)d(z,y)}$
for all
$x$, $y\in X$
distinct from
$z$, $d_z(x,z)=\infty$
for all
$x\in X\setminus\{z\}$
and
$d_z(z,z)=0$.

The classical example of a Ptolemy space is
$\wh\R^n$
with a standard M\"obius structure. 
 
One of the first interesting facts about Ptolemy spaces is the Schoenberg theorem.  

\begin{theorem}[Schoenberg, \cite{Sch}]\label{thm:schoenberg}
A real normed vector space, which is a Ptolemy space, is an inner product space.  
\end{theorem}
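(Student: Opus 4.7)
The plan is to reduce to the Jordan--von Neumann theorem, according to which a norm on a real vector space comes from an inner product if and only if it satisfies the parallelogram identity $\|x+y\|^2+\|x-y\|^2=2\|x\|^2+2\|y\|^2$. Since all vectors involved lie in the subspace spanned by $x$ and $y$, it suffices to work inside an arbitrary $2$-dimensional subspace of the normed space $V$.

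The first step is to feed specific quadruples into the Ptolemy inequality and compute. Applied to the ``parallelogram'' $(0,x,x+y,y)$, the three cross-products of opposite pairs of distances simplify to $\|x\|^2$, $\|y\|^2$, and $\|x+y\|\cdot\|x-y\|$, so the triangle inequality among them yields
$$\|x+y\|\cdot\|x-y\|\leq \|x\|^2+\|y\|^2. \qquad (*)$$
Applied to the centrally symmetric quadruple $(x,-x,y,-y)$ it produces the companion inequality $4\|x\|\|y\|\leq \|x+y\|^2+\|x-y\|^2$, which is in fact equivalent to $(*)$ via the substitution $(x,y)\mapsto((x+y)/2,(x-y)/2)$. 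These inequalities are already nontrivial: $(*)$ fails for the $\ell^\infty$-norm on $\R^2$ at $x=e_1$, $y=e_2$, so the Ptolemy property genuinely restricts the norm.

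The main obstacle is upgrading this family of inequalities into the parallelogram \emph{equality}. Two natural routes are available. The algebraic one is to substitute $(x,y)\mapsto(\lambda x+\mu y,\nu x+\rho y)$ into $(*)$, obtaining a multi-parameter family of inequalities which attain equality simultaneously at a distinguished symmetric configuration; differentiating along a suitable direction should force the parallelogram identity. The geometric one, available in $2$D, is to identify the unit sphere with a centrally symmetric convex curve $\gamma$ and apply the Ptolemy property to four points moving on $\gamma$; as two of them coalesce, the inequality degenerates into a differential constraint on $\gamma$ whose only centrally symmetric convex solutions are ellipses, which is equivalent to the norm being Euclidean. Any mild regularity issues in these arguments are handled by a density argument, since both sides of the parallelogram identity are continuous in $(x,y)$.
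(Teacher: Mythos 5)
The paper does not prove this statement at all: it is imported as a classical theorem with a citation to \cite{Sch}, so there is no internal proof to compare against. Judged on its own, your proposal begins exactly the way Schoenberg's note does --- feeding $(0,x,x+y,y)$ and $(x,-x,y,-y)$ into the Ptolemy inequality to obtain $\|x+y\|\,\|x-y\|\le\|x\|^2+\|y\|^2$ and $4\|x\|\,\|y\|\le\|x+y\|^2+\|x-y\|^2$ --- but it stops precisely where the real work begins. Neither of your two ``routes'' is carried out, and the step you gesture at (``differentiating \dots should force the parallelogram identity'') hides a genuine gap: the inequality $4\|x\|\,\|y\|\le\|x+y\|^2+\|x-y\|^2$ agrees with the lower half of the parallelogram law only when $\|x\|=\|y\|$, and the usual substitution $u=x+y$, $v=x-y$ that converts a one-sided inequality valid for \emph{all} pairs into an equality does not preserve the constraint $\|u+v\|=\|u-v\|$, so no elementary algebraic manipulation closes the loop from what you have derived.

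The missing ingredient is M.~M.~Day's characterization of inner product spaces: a real normed space is an inner product space as soon as $\|x+y\|^2+\|x-y\|^2\ge 4$ for all unit vectors $x,y$ (this one-sided ``rhombus'' inequality alone suffices). With that in hand, your second inequality restricted to the unit sphere finishes the proof in one line --- which is exactly Schoenberg's argument and the reason his paper is titled a remark on Day's characterization. Either cite Day's theorem or prove an equivalent statement; as written, your proposal establishes the correct necessary inequalities but does not prove the theorem. A minor correction: your claimed counterexample is misplaced --- for $x=e_1$, $y=e_2$ in $\ell^\infty$ the inequality $(*)$ itself reads $1\le 2$ and holds; it is the companion inequality, reading $4\le 2$ there, that fails.
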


A Ptolemy circle in a Ptolemy space 
$X$ 
is a subset 
$\sigma\subset X$ 
homeomorphic to 
$S^1$  
such that for every quadruple
$(x,y,z,u)\in\sigma$
of distinct points the equality 
\begin{equation*}\label{eq:PT_eq}
d(x,z)d(y,u)=d(x,y)d(z,u)+d(x,u)d(y,z)
\end{equation*}
holds for some and hence for any metric 
$d$
of the M\"obius structure, where it is supposed that the pair
$(x,z)$
separates the pair
$(y,u)$,
i.e.
$y$
and
$u$
are in different components of
$\sigma\setminus\{x,z\}$. 

Given
$\omega\in X$,
we use the notation
$X_\omega=X\setminus\omega$
and always assume that a metric of the M\"obius structure on
$X_\omega$
is fixed. 
Note that every Ptolemy circle
$\sigma\subset X$
that passes through
$\omega$
is isometric to a geodesic line in $X_\omega$. Such a line
$\ell=\sigma_\omega$ 
is called a {\em Ptolemy} line.

\subsection{Space inversions}

Given distinct
$\omega$, $\omega'\in X$,
we say that a subset
$S\subset X$
is a {\em metric sphere between}
$\omega$, $\omega'$,
if
$$S=\set{x\in X}{$d(x,\omega)=r$}=S_r^d(\omega)$$
for some metric
$d\in\cM$
with infinitely remote point 
$\omega'$
and some
$r>0$.
Any two such metrics
$d$, $d'\in\cM$
are proportional to each other,
$d'=\lambda d$
for some 
$\lambda>0$. 
Then
$S_r^d(\omega)=S_{\lambda r}^{d'}(\omega)$.
Moreover, this notion is symmetric w.r.t. 
$\omega$, $\omega'$,
because any metric 
$d'\in\cM$
with infinitely remote point 
$\omega$
is proportional to the m-inversion of
$d$
w.r.t.
$\omega$,
and we can assume that
$d'$
is the m-inversion itself. Then
$S=\set{x\in X}{$d'(x,\omega')=1/r$}$.

We define a {\em strong space inversion}, or s-inversion for brevity,
w.r.t. distinct
$\omega$, $\omega'\in X$
and a metric sphere 
$S\subset X$
between
$\omega$, $\omega'$
as a M\"obius automorphism
$\phi=\phi_{\omega,\omega',S}:X\to X$
such that
\begin{itemize}
 \item[(1)] $\phi$
is an involution,
$\phi^2=\id$;
 \item[(2)] $\phi(\omega)=\omega'$ (and thus
$\phi(\omega')=\omega$);
 \item[(3)] $\phi$
preserves
$S$
pointwise, 
$\phi(x)=x$
for every 
$x\in S$;
 \item[(4)] $\phi(\sigma)=\sigma$
for any Ptolemy circle
$\sigma\subset X$
through
$\omega$, $\omega'$.
\end{itemize}

Let $\omega\in X$.
Fix 
$o\in X_\omega$ 
and consider a metric sphere $S=S_r(o)$ between $o$ and $\omega$. 
Let $\phi$ be an s-inversion w.r.t. $o,\omega$ and $S$.
Now we prove two technical lemmas. 

\begin{lemma}\label{lem:inv_1}
Let $x\in X_\omega$.   
Then 
$|ox|\cdot |o\phi(x)|=r^2$.
\end{lemma}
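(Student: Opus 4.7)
The plan is to exploit property (2) of the s-inversion, which swaps $o$ and $\omega$, together with property (3), which fixes $S$ pointwise, and extract $|ox|\cdot|o\phi(x)|$ from a well-chosen cross-ratio identity. Since $\phi$ is a M\"obius map, it preserves cross-ratio triples of admissible quadruples. So the key is to find a quadruple whose cross-ratio triple contains both the quantity $|ox|$ and the quantity $|o\phi(x)|$, and whose image under $\phi$ is easy to describe.

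First I would pick any point $s\in S$ (nonempty since $r>0$), and consider the quadruple $Q=(o,x,s,\omega)$. Assuming for now that $x\notin\{o,s,\omega\}$, this is admissible. Using the formula for the cross-ratio triple when one entry is infinitely remote, I get
\[
\crt(o,x,s,\omega)=(|ox|:|os|:|xs|)=(|ox|:r:|xs|),
\]
where I used $s\in S=S_r(o)$ so $|os|=r$. Applying $\phi$ and using properties (2) and (3), the image quadruple is $\phi(Q)=(\omega,\phi(x),s,o)$, and the same limiting formula (now with the infinitely remote point in the first slot) gives
\[
\crt(\omega,\phi(x),s,o)=(|so|:|o\phi(x)|:|\phi(x)s|)=(r:|o\phi(x)|:|\phi(x)s|).
\]

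By M\"obius invariance these two projective triples coincide, so comparing the first two homogeneous coordinates yields $|ox|\cdot|o\phi(x)|=r\cdot r=r^2$, which is the claim. The remaining step is to handle the degenerate cases of the quadruple: if $x=s$ then $\phi(x)=x\in S$ and the equality reads $r\cdot r=r^2$ trivially, and the case $x=o$ is excluded (or interpreted as the limit $0\cdot\infty$).

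I do not expect a serious obstacle; the only care needed is the bookkeeping of cross-ratio triples when the infinitely remote point $\omega$ appears in different positions of the quadruple, and ensuring the quadruples used are admissible. The choice of $s\in S$ is what links the radius $r$ to the computation, and property (3) is precisely what ensures $\phi(s)=s$ so that the image quadruple stays as clean as possible.
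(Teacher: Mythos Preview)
Your argument is correct and is essentially the same as the paper's own proof: both pick a point on the sphere $S$, form the admissible quadruple with $o$, $x$, that sphere point, and $\omega$, and read off $|ox|\cdot|o\phi(x)|=r^2$ from the M\"obius invariance of the cross-ratio triple under $\phi$ (using $\phi(o)=\omega$ and that $\phi$ fixes the sphere point). The only difference is the ordering of the quadruple, which is immaterial.
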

\begin{proof}
Let $y\in S$. 
Then 
\begin{multline*}
\crt(x,y,o,\omega)=(|xy|:|xo|:|yo|)=\crt(\phi(x),\phi(y),\phi(o),\phi(\omega))\\
=\crt(\phi(x),y,\omega,o)=(|\phi(x)y|:|yo|:|\phi(x)o|).
\end{multline*}
It follows that $|\phi(x)o|/|yo|=|yo|/|xo|$ and $|ox|\cdot |o\phi(x)|=r^2$. 
\end{proof}

\begin{lemma}\label{lem:inv_2}
Let $x,y\in X_\omega$. 
Then
$
|\phi(x)\phi(y)|=r^2\cdot\frac{|xy|}{|ox|\cdot|oy|}
$.  
\end{lemma}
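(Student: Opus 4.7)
The plan is to mimic the argument of Lemma~\ref{lem:inv_1}, but applied to the admissible quadruple $(x,y,o,\omega)$ of four generic points instead of $(x,y,o,\omega)$ with $y$ restricted to the sphere. The first step is to combine the M\"obius property of $\phi$ with the fact that, by (1) and (2), $\phi$ swaps $o$ and $\omega$; this gives
$$\crt(x,y,o,\omega) = \crt(\phi(x),\phi(y),\phi(o),\phi(\omega)) = \crt(\phi(x),\phi(y),\omega,o).$$

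Next I would expand both cross-ratio triples explicitly, using the convention fixed in the preliminaries that when the infinitely remote point $\omega$ appears once in a quadruple the triple reduces to the three distances among the remaining points. The left-hand side gives $(|xy|:|xo|:|yo|)$, while the right-hand side, where $\omega$ now sits in the third slot, unfolds to $(|\phi(x)\phi(y)|:|\phi(y)o|:|\phi(x)o|)$. Equality of these elements of $\R P^2$ produces a common scalar $\lambda$ satisfying
$$|\phi(x)\phi(y)|=\lambda|xy|,\qquad |\phi(y)o|=\lambda|xo|,\qquad |\phi(x)o|=\lambda|yo|.$$

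To finish, I would read $\lambda$ off one of the last two equalities and substitute Lemma~\ref{lem:inv_1}: from $|\phi(y)o|=r^2/|oy|$ the middle equation gives $\lambda = r^2/(|ox|\cdot|oy|)$, and plugging this into the first equation yields the claimed identity. The only subtlety is the bookkeeping of how the cross-ratio triple reduces when the infinitely remote point appears in different slots of the quadruple; this is routine provided one keeps track of which two terms become infinite and cancel in the projective ratio.
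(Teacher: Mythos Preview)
Your proposal is correct and follows essentially the same route as the paper: compare $\crt(x,y,o,\omega)$ with $\crt(\phi(x),\phi(y),\omega,o)$, read off the resulting proportionality, and plug in Lemma~\ref{lem:inv_1}. The only cosmetic difference is that you name the scalar $\lambda$ explicitly and solve for it via the middle entry, whereas the paper takes the ratio of the first and third entries directly; the content is identical.
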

\begin{proof}
Note that 
\begin{multline*}
\crt(x,y,o,\omega)=(|xy|:|xo|:|yo|)\\
=\crt(\phi(x),\phi(y),\omega,o)=
(|\phi(x)\phi(y)|:|\phi(y)o|:|\phi(x)o|).
\end{multline*}
It follows that $|\phi(x)\phi(y)|/|\phi(x)o|=|xy|/|yo|$. 
From Lemma~\ref{lem:inv_1} we have that $|\phi(x)o|=r^2/|xo|$. 
Then 
$
|\phi(x)\phi(y)|=|\phi(x)o|\cdot\frac{|xy|}{|yo|}=r^2\frac{|xy|}{|ox|\cdot|oy|}
$.
\end{proof}

We say that a M\"obius space $X$ has the property
(E)  if there is a Ptolemy circle in  
$X$.
And we also say that 
a M\"obius space $X$ has the property
(sI) if for any distinct
$\omega$, $\omega'\in X$
and a metric sphere
$S\subset X$
between
$\omega$, $\omega'$
there is an s-inversion
$\phi_{\omega,\omega',S}:X\to X$
w.r.t.
$\omega$, $\omega'$
and
$S$.

From now on, we assume that 
$X$ is a compact Ptolemy space with properties 
(E) and (sI). 

\section{Homotheties and shifts}

\subsection{Homotheties}

Fix $\omega\in X$. 
Let $o\in X_\omega$, $\lambda>0$.  
Consider $r_1,r_2>0$ such that $\lambda=r^2_2/r^2_1$.
Let  $S_1=S_{r_1}(o), S_2=S_{r_2}(o)\subset X_\omega$ be metric spheres between $o,\omega$. 
Denote by $\phi_1,\phi_2$  s-inversions w.r.t. $o, \omega, S_1$ and $o, \omega, S_2$ respectively.

We define a {\em homothety 
with the center $o$ and the coefficient $\lambda$}
as a M\"obius automorphism $h\colon X\to X$ such that 
$h=\phi_2\circ \phi_1$.

Note that the next properties follow from the definition of an s-inversion and 
from Lemma~\ref{lem:inv_2}.  
 
\begin{itemize}
 \item[(1)] $h(o)=o, h(\omega)=\omega$. 
 \item[(2)] $h(\sigma)=\sigma$
for any Ptolemy circle
$\sigma\subset X$
through
$o$, $\omega$.
 \item[(3)] $|h(x)h(y)|=\lambda |xy|$ for all $x, y \in X_{\omega}$. 
 \item[(4)] For each $o\in X_\omega$ and each $\lambda>0$  
there exists a homothety with the center $o$ and the coefficient $\lambda$.
 
\end{itemize}

We denote a homothety with the center $o$ and the coefficient $\lambda$ by $h_{\lambda,o}$. 

\begin{proposition}\label{pro:homothety_transit}
Let  $\omega$, $\omega'\in X$, 
and let
$\sigma$ be a Ptolemy circle
through $\omega$, $\omega'$ and let $\Gamma\subset \sigma$ be
a connected component of $\sigma\setminus\{\omega,\omega'\}$. 
Consider $x,x'\in \Gamma$.
Then there exists a homothety $h$ with center  $\omega'$ such that $h(x)=x'$. 
\end{proposition}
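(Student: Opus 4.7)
The plan is to fix $\omega$ as the infinitely remote point, so that $\sigma_\omega$ becomes a Ptolemy line $\ell\subset X_\omega$ through $\omega'$ and $\Gamma$ corresponds to one of the two rays of $\ell$ emanating from $\omega'$. I will then produce a homothety with centre $\omega'$ whose restriction to $\ell$ is scalar multiplication by the ratio $\lambda=|x'\omega'|/|x\omega'|$.

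First I would set $\lambda=|x'\omega'|/|x\omega'|>0$ and invoke property~(4) to obtain a homothety $h=h_{\lambda,\omega'}=\phi_2\circ\phi_1$, where each $\phi_i$ is a strong s-inversion w.r.t.\ $\omega',\omega,S_i$, with $S_i=S_{r_i}(\omega')$ and $r_2^2/r_1^2=\lambda$. Property~(3) gives $|h(x)\omega'|=\lambda|x\omega'|=|x'\omega'|$, and property~(2) gives $h(\sigma)=\sigma$, so $h(x)$ is one of the two points of $\ell$ at distance $|x'\omega'|$ from $\omega'$, namely either $x'$ itself or the point symmetric to $x'$ through $\omega'$ on the other component $\Gamma'$. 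It remains to verify that $h$ preserves $\Gamma$.

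For this I would analyse each $\phi_i|_\sigma$ separately. Identifying $\sigma$ M\"obius-equivalently with $\wh\R$ via $\omega\mapsto\infty$, $\omega'\mapsto 0$, the map $\phi_i|_\sigma$ is a M\"obius involution of $\wh\R$ swapping $0\leftrightarrow\infty$, hence of the form $t\mapsto c_i/t$; the two fixed points in $S_i\cap\sigma$ sit at signed distances $\pm r_i$ from $\omega'$, which forces $c_i=r_i^2$. Composing,
\[
h|_\ell(t)=\phi_2\bigl(\phi_1(t)\bigr)=\frac{r_2^2}{\,r_1^2/t\,}=\lambda\,t,
\]
which preserves each open ray through $0$. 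Hence $h(x)$ and $x'$ lie in the same component $\Gamma$ at the same distance from $\omega'$, which gives $h(x)=x'$.

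The main obstacle is the step identifying $\phi_i|_\sigma$ with the inversion $t\mapsto r_i^2/t$: this uses the standard fact (already implicit in \cite{BS1}) that a Ptolemy circle is M\"obius equivalent to $\wh\R$, so that its M\"obius automorphism group coincides with $PGL(2,\R)$. Once that is granted, the explicit form of $\phi_i|_\sigma$ is forced by the three conditions — involution, swap $\omega\leftrightarrow\omega'$, and fixed-point set $S_i\cap\sigma$ — and the conclusion follows from a one-line computation on the line $\ell$.
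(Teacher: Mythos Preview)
Your proof is correct and follows the same outline as the paper's: work in $X_\omega$, recognize $\Gamma$ as a geodesic ray emanating from $\omega'$, and take $h=h_{\lambda,\omega'}$ with $\lambda=|x'\omega'|/|x\omega'|$. The paper's own argument is in fact much terser than yours---it simply asserts $h(x)=x'$ without explicitly addressing why $h$ keeps $x$ on the same component $\Gamma$---so your extra paragraph is filling in a step the paper leaves to the reader.

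That said, your justification for this step is heavier than necessary. You invoke the M\"obius identification of $\sigma$ with $\wh\R$ and the description of its automorphism group as $PGL(2,\R)$ in order to compute $\phi_i|_\sigma$ explicitly as $t\mapsto r_i^2/t$. A more elementary route, entirely within the paper's toolkit and presumably what the author had in mind, is purely topological: each $\phi_i$ preserves $\sigma$ and permutes the pair $\{\omega,\omega'\}$, so it permutes the two arcs $\Gamma,\Gamma'$ of $\sigma\setminus\{\omega,\omega'\}$; but $\phi_i$ also fixes pointwise the sphere $S_i=S_{r_i}(\omega')$, which meets $\Gamma$ (since $\Gamma$ is a ray from $\omega'$ of infinite length). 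Having a fixed point on $\Gamma$, $\phi_i$ must send $\Gamma$ to itself, and hence so does $h=\phi_2\circ\phi_1$. This avoids any appeal to the structure of $\mob(\wh\R)$ and makes direct use of the ``strong'' part of the s-inversion axiom.
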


\begin{proof}
Consider a metric space $X_\omega$.  
Since $\omega\in \sigma$, 
$\Gamma$ is a geodesic ray starting at $\omega'$. 
Define
$\lambda$
by
$|\omega'x'|=\lambda|\omega'x|$. 
Then
$h(x)=x'$
for
$h=h_{\lambda,\omega'}$. 
\end{proof}

\begin{corollary}\label{cor:ptsirctwopoints}
Any two distinct Ptolemy circles in a Ptolemy space with
properties (E) and (sI) have at most two points in common.
\end{corollary}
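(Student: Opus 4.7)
The plan is proof by contradiction. Suppose two distinct Ptolemy circles $\sigma_1\neq\sigma_2$ share at least three points; designate two of them as $\omega,\omega'$ and a third as $p$. Both circles pass through $\omega,\omega'$, so for each $i=1,2$ the set $\sigma_i\setminus\{\omega,\omega'\}$ splits into two open arcs. Let $\Gamma$ denote the component of $\sigma_1\setminus\{\omega,\omega'\}$ containing $p$ and $\Gamma'$ the other.

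First I would show $\Gamma\subseteq\sigma_2$. Proposition~\ref{pro:homothety_transit}, applied to $\sigma_1$, gives for every $x'\in\Gamma$ a homothety $h=h_{\lambda,\omega'}$ with $h(p)=x'$. Property~(2) of homotheties says $h$ preserves every Ptolemy circle through $\omega,\omega'$, so $h(\sigma_2)=\sigma_2$; hence $x'=h(p)\in\sigma_2$, proving $\Gamma\subseteq\sigma_2$.

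Then I would iterate to recover $\Gamma'$. Pick three distinct points $q_1,q_2,q_3\in\Gamma$ in that order along the arc; all lie in $\sigma_1\cap\sigma_2$ by the previous step. With the new pair $q_1,q_2$ replacing $\omega,\omega'$, the point $q_3$ lies on the ``long'' component of $\sigma_1\setminus\{q_1,q_2\}$, namely the one that contains $\omega$, $\Gamma'$, and $\omega'$. Homotheties centered at $q_2$ in $X_{q_1}$ preserve $\sigma_2$ since $\sigma_2$ passes through $q_1,q_2$, so a second application of Proposition~\ref{pro:homothety_transit} sweeps $q_3$ over this entire long arc, forcing $\Gamma'\subseteq\sigma_2$. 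Then $\sigma_1=\Gamma\cup\{\omega,\omega'\}\cup\Gamma'\subseteq\sigma_2$, and since both are topological circles this forces $\sigma_1=\sigma_2$, contradicting our assumption. The main subtlety is precisely this iteration: Proposition~\ref{pro:homothety_transit} only yields homotheties with positive coefficient, acting transitively on a single arc but unable to cross $\omega$ or $\omega'$; moving the base pair into the interior of $\Gamma$ swaps which of the two arcs of $\sigma_1$ is ``long'', which is what lets the argument recover $\Gamma'$.
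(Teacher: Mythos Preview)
Your proof is correct and rests on the same tool as the paper's: homotheties centered at one intersection point preserve every Ptolemy circle through that point and the infinitely remote one, so Proposition~\ref{pro:homothety_transit} lets a third intersection point sweep out an entire arc inside both circles. The paper's bookkeeping is slightly different: it fixes one common point $\omega$ as infinitely remote throughout, calls the other two common points $x,x'$, and uses homotheties centered at $x$ to show directly that the ray of $\sigma\setminus\{x,\omega\}$ through $x'$ \emph{equals} the ray of $\sigma'\setminus\{x,\omega\}$ through $x'$; then homotheties centered at $x'$ handle the rays through $x$, and since these two overlapping rays cover each Ptolemy line, $\sigma=\sigma'$. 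Your variant---first absorbing one arc $\Gamma$ into $\sigma_2$, then relocating the base pair to interior points $q_1,q_2\in\Gamma$ so that the remaining arc $\Gamma'$ lies on a single component of $\sigma_1\setminus\{q_1,q_2\}$---is a legitimate alternative that makes explicit how one gets past the original endpoints $\omega,\omega'$.
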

\begin{proof}
Let $\sigma,\sigma'\subset X$ 
be intersecting Ptolemy circles with 
$\omega\in \sigma\cap \sigma'$. 
Consider a metric space $X_\omega$. 
By contradiction suppose that there exist
$x,x'\in (\sigma\cap \sigma')\setminus\{\omega\}$. 
Let $\Gamma$ be a connected component of $\sigma\setminus\{x,\omega\}$ such that $x'\in \Gamma$. 
Also let $\Gamma'$ be a connected component of $\sigma'\setminus\{x,\omega\}$ such that $x'\in \Gamma'$. 
Note that if $x''\in \Gamma$ and $\lambda=|xx''|/|xx'|$ then for a homothety $h=h_{\lambda,x}$  
we have $h(x')=x''$. Then $x''\in \Gamma'$ and $\Gamma\subset \Gamma'$. 
Similarly, $\Gamma'\subset \Gamma$ and thus $\Gamma=\Gamma'$. 
In the same way, if  $\Gamma_1$ is a connected component of $\sigma\setminus\{x',\omega\}$ 
such that $x\in \Gamma_1$  and $\Gamma'_1$ is a connected component of 
$\sigma'\setminus\{x',\omega\}$ such that $x\in \Gamma'_1$, 
we can prove that $\Gamma_1=\Gamma'_1$. 
It follows that $\sigma=\sigma'$. 
\end{proof}

\subsection{Shifts}

Note that $X$ is Hausdorff and compact. 
If we fix a nonprincipal ultrafilter $\theta$ 
on the set of natural numbers $\N$ then for each sequence $x_n\in X$ there exists a unique 
$x\in X$ such $x=\lim_\theta x_n$. 
Moreover $|\lim_\theta(x_n)\lim_\theta(y_n)|=\lim_\theta|x_ny_n|$ for all sequences $x_n, y_n\in X$. 

In this section we need the following well known fact, see e.g.
\cite{BS1}, Lemma~6.7. 

\begin{lemma}\label{lem:nondegenerate_morphism} Assume that
for a nondegenerate triple
$T=(x,y,z)\subset X$
and for a sequence 
$\phi_i\in\mob X$
the sequence
$T_i=\phi_i(T)$
$\theta$-converges to a nondegenerate triple
$T'=(x',y',z')\subset X$.
Then there exists
$\phi=\lim_\theta\phi_i\in\mob X$
with
$\phi(T)=T'$.
\end{lemma}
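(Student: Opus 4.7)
The plan is to construct $\phi$ pointwise via the ultralimit, setting $\phi(w):=\lim_\theta\phi_i(w)$ for every $w\in X$; this is well defined by the compactness of $X$ noted just above the lemma. By construction $\phi(T)=T'$, and the remaining task is to show that $\phi$ is a bijective M\"obius self-map.

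For the cross-ratio identity, fix an admissible quadruple $Q=(a,b,c,d)\subset X$. Each $\phi_i$ satisfies $\crt(\phi_i(Q))=\crt(Q)$ as a projective class in $\R P^2$; since distances $\theta$-commute with the ultralimit, the three distance products representing the cross-ratio converge to the corresponding products for $\phi(Q)$. Provided $\phi(Q)$ is itself admissible, the projective limit is $\crt(\phi(Q))$, and one obtains $\crt(\phi(Q))=\crt(Q)$.

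The expected main obstacle is injectivity of $\phi$, which is precisely what legitimizes admissibility of $\phi(Q)$ in the previous step. Suppose for contradiction that $\phi(a)=\phi(b)=:p$ for some distinct $a,b\in X$. Since $T=(x,y,z)$ is nondegenerate, two of its entries -- say $y,z$ -- are distinct from both $a$ and $b$; since $T'$ is nondegenerate, $y'=\phi(y)$ and $z'=\phi(z)$ are distinct from each other and from $p$. A direct computation gives
\[
\crt(\phi(a),\phi(b),y',z')=(0:d(p,y')d(p,z'):d(p,z')d(p,y'))=(0:1:1),
\]
while the cross-ratio identity, passed through the ultralimit, forces this class to equal the constant value $\crt(a,b,y,z)=(d(a,b)d(y,z):d(a,y)d(b,z):d(a,z)d(b,y))$, whose first coordinate is strictly positive since $a\neq b$ and $y\neq z$. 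This contradiction establishes injectivity, and with it the M\"obius property on every admissible quadruple.

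For bijectivity, I would run the same ultralimit construction on the sequence of inverses $\phi_i^{-1}$ to obtain an injective cross-ratio preserving map $\psi(v):=\lim_\theta\phi_i^{-1}(v)$ satisfying $\psi(T')=T$. To close the loop one observes that $\phi_i\circ\phi_i^{-1}=\id$ for every $i$, so taking the $\theta$-limit together with the equicontinuity of a pointwise-convergent family of M\"obius maps on a compact Ptolemy space yields $\phi\circ\psi=\id$, and symmetrically $\psi\circ\phi=\id$. Hence $\phi$ is bijective and $\phi=\lim_\theta\phi_i\in\mob X$ with $\phi(T)=T'$, as required.
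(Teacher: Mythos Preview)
The paper does not give its own proof of this lemma: it is quoted as a known fact with a reference to \cite{BS1}, Lemma~6.7, and no argument is supplied. So there is nothing in the paper to compare your proof against; what follows is an assessment of your argument on its own.

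Your pointwise construction of $\phi$ and the injectivity step are essentially correct. One small point: after choosing two entries of $T$, say $y,z$, that avoid $a$ and $b$, you assert that $y',z'$ are distinct from $p$ ``since $T'$ is nondegenerate''. Nondegeneracy alone only gives $y'\neq z'$; the extra fact you need is that if one of $a,b$ lies in $T$, say $a=x$, then $p=\phi(a)=x'$ and nondegeneracy of $T'$ forces $y',z'\neq p$, while if $a,b\notin T$ then at most one of $x',y',z'$ can equal $p$ and you simply choose the other two. This is an easy fix.

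The genuine gap is in the surjectivity step. You assert ``equicontinuity of a pointwise-convergent family of M\"obius maps on a compact Ptolemy space'' and use it to interchange the ultralimit with composition. This is not automatic: pointwise convergence of continuous maps on a compact space does not imply equicontinuity in general, and being M\"obius does not by itself help (think of a sequence of M\"obius maps of $\wh\R$ pushing more and more of the space toward a single point). What makes equicontinuity true here is precisely the hypothesis that the images $\phi_i(T)$ stay nondegenerate in the limit; one must use cross-ratios against the triple $T$ to get uniform distance control on $\phi_i$. Concretely, for $w$ close to $w_0$ the smallness of the first entry of $\crt(w,w_0,y,z)$ transfers to $\crt(\phi_i(w),\phi_i(w_0),\phi_i(y),\phi_i(z))$, and the nondegeneracy of the limit triple keeps the remaining distance products bounded away from zero uniformly in $i$. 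This is exactly the content of the cited result in \cite{BS1}, and it is where the real work lies; as written, your proof assumes the key estimate rather than proving it.
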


Fix $\omega\in X$ and let $x, x'\in X_{\omega}$. 
Let $\lambda_n>0$, $n\in \N$, be a sequence goes to zero. 
Consider a homothety $h_n$ with center $x$ and coefficient $\lambda^{-1}_n$ and 
a homothety $h'_n$ with center $x'$ and coefficient $\lambda_n$. 
Denote their composition $h'_n\circ h_n$ by $\eta_n$. 
Note that $\eta_n$ is an isometry for each $n\in \N$. 
Then by Lemma~\ref{lem:nondegenerate_morphism} $\eta=\lim_\theta\eta_n$   
is a M\"obius automorphism with
$\eta(x)=x'$ and $\eta(\omega)=\omega$.
Moreover 
$\eta:X_\omega\to X_\omega$
is an isometry. We call the isometry $\eta_{xx'}$ 
constructed above a {\em shift} from $x$ to $x'$. 
For each $x,x'\in X_\omega$ 
there exists a shift from $x$ to $x'$.

\section{Foliations by parallel lines} 

Each Ptolemy line 
$\ell\subset X_\omega$ 
is isometric to $\R$ so 
for every
$x_0\in\ell$
the Busemann functions 
$b^{\pm}_{\ell,x_0}\colon X_{\omega} \to \R$ are well defined 
by the formula 
$$
b^{\pm}_{\ell,x_0}(x)=\lim\limits_{t\to \pm\infty}|xc(t)|-|x_0c(t)|, 
$$ 
where $c(t)\colon \R\to \ell$ is a unit speed parameterization.
 
We say that Ptolemy lines
$\ell$, $\ell'\subset X_\omega$
are {\em Busemann parallel} if 
$\ell$, $\ell'$
share Busemann functions, that is, any Busemann function
associated with
$\ell$
is also a Busemann function associated with
$\ell'$
and vice versa.

The following lemmas are proved in \cite{BS1}, and the proofs
go without changes in our case.

\begin{lemma}[\cite{BS1}, Lemma~4.11]\label{lem:unique_line} Let
$\ell$, $\ell'\subset X_\omega$
be Ptolemy lines with a common point,
$o\in l\cap \ell'$, $b:X_\omega\to\R$ be 
a Busemann function of
$\ell$
with
$b(o)=0$.
Assume 
$b\circ c(t)=-t=b\circ c'(t)$
for all 
$t\ge 0$
and for appropriate unit speed parameterizations
$c$, $c':\R\to X_\omega$
of
$\ell$, $\ell'$
respectively
with
$c(0)=o=c'(0)$.
Then
$l=l'$.
In particular, Busemann parallel Ptolemy lines coincide if
they have a common point.
\end{lemma}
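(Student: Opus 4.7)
The goal is to show that two Ptolemy lines through $o$ with matching positive-ray Busemann behavior coincide. The plan is to reduce everything to Corollary~\ref{cor:ptsirctwopoints}: the Ptolemy circles $\sigma = \ell \cup \{\omega\}$ and $\sigma' = \ell' \cup \{\omega\}$ already share $o$ and $\omega$, so producing just one additional common point finishes the argument. I will produce it by showing $c'(t) = c(t)$ for a fixed $t > 0$.

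The first step is to upgrade the Busemann hypothesis to a metric identity. Since $b$ is a Busemann function of $\ell$, the equality $b \circ c(t) = -t$ extends to all $t \in \R$; in particular $b(c(-s)) = s$ for every $s \ge 0$. Combining the 1-Lipschitz property of $b$ with $b(c'(t)) = -t$ and the triangle inequality through $o$ yields
\[
  t + s \;=\; |b(c'(t)) - b(c(-s))| \;\le\; |c'(t)\, c(-s)| \;\le\; |c'(t)\, o| + |o\, c(-s)| \;=\; t + s,
\]
so $|c'(t)\, c(-s)| = t + s$ for every $s \ge 0$; specializing $s = t$ gives $|c'(t)\, c(-t)| = 2t$.

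Next, I would plug these values into the Ptolemy inequality for the quadruple $(c'(t), o, c(-t), c(-s))$ with $s > t > 0$. The three cross-ratio products become
\[
  |c'(t)\,o|\cdot|c(-t)\,c(-s)| = t(s-t), \quad |c'(t)\,c(-t)|\cdot|o\,c(-s)| = 2ts, \quad |c'(t)\,c(-s)|\cdot|o\,c(-t)| = t(t+s),
\]
and one checks $t(s-t) + t(t+s) = 2ts$, so the Ptolemy inequality is an equality. I will then invoke the converse Ptolemy characterization to conclude that the four points lie on a common Ptolemy circle $\tau$. Since three of them -- $o$, $c(-t)$, $c(-s)$ -- already lie on $\sigma$, Corollary~\ref{cor:ptsirctwopoints} forces $\tau = \sigma$, so $c'(t) \in \sigma \setminus \{\omega\} = \ell$. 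Together with $|o\,c'(t)| = t$ this locates $c'(t) \in \{c(t), c(-t)\}$, and the Busemann values $b(c(-t)) = t \ne -t = b(c'(t))$ exclude $c'(t) = c(-t)$. Hence $c'(t) = c(t)$, and one last application of Corollary~\ref{cor:ptsirctwopoints} yields $\sigma = \sigma'$, i.e., $\ell = \ell'$.

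The delicate step is the invocation of the converse of the Ptolemy inequality -- that equality on four points forces them onto a common Ptolemy circle. This is classical in Ptolemy geometry but is not explicitly recorded among the preceding lemmas, so the proof will either have to cite such a characterization or supply a direct substitute in the present setting, for instance by using the abundance of strong s-inversions from (sI) to normalize the quadruple onto a configuration lying on an already known Ptolemy circle. Everything else is routine: the 1-Lipschitz property of Busemann functions, the triangle inequality, and the uniqueness of Ptolemy circles through three points.
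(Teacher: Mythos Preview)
Your argument has a genuine gap at the step you yourself flag as ``delicate.'' In an abstract Ptolemy space, equality in the Ptolemy inequality for a quadruple does \emph{not} force the four points onto a common Ptolemy circle; that implication is a theorem about $\wh\R^n$, not about Ptolemy spaces in general. Nothing in the preceding material of this paper (s-inversions, homotheties, Corollary~\ref{cor:ptsirctwopoints}) supplies such a converse, and your proposed workaround---``normalize the quadruple onto a configuration lying on an already known Ptolemy circle'' via (sI)---is a slogan, not an argument. Concretely: you have three points $o,c(-t),c(-s)$ on $\sigma$ and a fourth point $c'(t)$ for which one Ptolemy product identity holds; to conclude $c'(t)\in\sigma$ you would need to know that the Ptolemy inequality is \emph{strict} for any point off $\sigma$ together with three points on $\sigma$. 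That strictness is exactly what is not available here, and without it Step~3 collapses.

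Everything before that step is fine: the extension of $b\circ c(t)=-t$ to all $t\in\R$, the identity $|c'(t)\,c(-s)|=t+s$, and the Ptolemy-equality computation are all correct and cleanly done. It is also worth noting that the paper itself gives no proof of this lemma---it simply cites \cite{BS1}, Lemma~4.11, remarking that ``the proofs go without changes in our case.'' So there is no in-paper argument to compare against; but the argument in \cite{BS1} does not proceed via a converse-Ptolemy step, and if you want a self-contained proof you will need a different mechanism to pass from the metric identity $|c'(t)\,c(-s)|=t+s$ to the set-theoretic conclusion $c'(t)\in\ell$.
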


\begin{lemma}[\cite{BS1}, Lemma~4.12]\label{lem:busparallel_sublinear}
Let
$c$, $c':\R\to X_\omega$
be unit speed parameterizations of Ptolemy lines
$\ell$, $\ell'\subset X_\omega$
respectively. 
If
$|c(t_i)c'(t_i)|/|t_i|\to 0$
for some sequences
$t_i\to\pm\infty$,
then the lines
$\ell$, $\ell'$
are Busemann parallel.

Vice versa, if 
$\ell$, $\ell'\subset X_\omega$
are Busemann parallel lines
then
$$\lim\limits_{t\to \infty}|c(t)c'(t)|/t=0$$
for appropriately chosen their unit speed parameterizations
$c$, $c':\R\to X_\omega$.
\end{lemma}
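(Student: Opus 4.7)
The proof splits naturally into the two implications.

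\textbf{Forward direction.} Fix $x\in X_\omega$. Apply the Ptolemy inequality, in both nontrivial orderings, to the quadruple $(x,c(0),c(t_i),c'(t_i))$; combining the two yields
\[
\bigl||xc(t_i)|\cdot|c(0)c'(t_i)|-|xc'(t_i)|\cdot|c(0)c(t_i)|\bigr|\le|xc(0)|\cdot|c(t_i)c'(t_i)|.
\]
Dividing by $|c(0)c(t_i)|=|t_i|$ and using the hypothesis $|c(t_i)c'(t_i)|/|t_i|\to 0$, this becomes $|xc'(t_i)|-|xc(t_i)|\cdot|c(0)c'(t_i)|/|t_i|\to 0$. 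Substituting the Busemann expansions $|xc(t_i)|=|t_i|+b_\ell^+(x)+o(1)$ and $|c(0)c'(t_i)|=|t_i|+b_{\ell'}^+(c(0))+o(1)$ (for $t_i\to+\infty$) and simplifying yields $|xc'(t_i)|-|t_i|\to b_\ell^+(x)+b_{\ell'}^+(c(0))$. The left-hand side converges by definition to $b_{\ell'}^+(x)$, so $b_{\ell'}^+(x)-b_\ell^+(x)$ equals the $x$-independent constant $b_{\ell'}^+(c(0))$; the analogous computation with the $-\infty$ sequence handles $b^-$, establishing that $\ell$ and $\ell'$ share all Busemann functions.

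\textbf{Reverse direction.} Suppose $\ell,\ell'$ are Busemann parallel. Reparametrize $c'$ so that $b_\ell^+=b_{\ell'}^+$ identically; the universal inequality $b^++b^-\ge 0$ (from $|xc(s)|+|xc(s')|\ge|c(s)c(s')|$ in the limits $s\to+\infty$, $s'\to-\infty$), saturated on each line, then forces $b_\ell^-=b_{\ell'}^-$ without residual shift. I argue by contradiction: if $|c(t_n)c'(t_n)|/t_n\ge\epsilon>0$ along some $t_n\to+\infty$, apply the homothety $\phi_n:=h_{1/t_n,c(0)}$ from Section~3.1, which fixes $c(0)$ and $\omega$, sends $c(t_n)$ to $c(1)$, and rescales $X_\omega$-distances by $1/t_n$. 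The image $\phi_n(c'(t_n))$ then lies at distance $|c(0)c'(t_n)|/t_n\to 1$ from $c(0)$ and at distance $\ge\epsilon$ from $c(1)$, and its Busemann coordinate computes via the homothety identity $b_\ell^+\circ\phi_n=(1/t_n)\,b_\ell^+$ together with $b_\ell^+(c'(t_n))=-t_n$ to give $b_\ell^+(\phi_n(c'(t_n)))=-1$. Passing to an ultrafilter limit via Lemma~\ref{lem:nondegenerate_morphism} produces a M\"obius automorphism $\phi$ whose image $\phi(\ell')$ is a Ptolemy line Busemann parallel to $\phi(\ell)=\ell$ and containing $c(0)=\lim_\theta\phi_n(c'(0))$; Lemma~\ref{lem:unique_line} then forces $\phi(\ell')=\ell$, so the limit $z:=\lim_\theta\phi_n(c'(t_n))$ lies on $\ell$. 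Continuity of $b_\ell^+$ gives $b_\ell^+(z)=-1$, which on $\ell$ pins $z=c(1)$, contradicting the $\ge\epsilon$ separation from $c(1)$.

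\textbf{Main obstacle.} The crux is the rescaling step in the reverse direction: applying Lemma~\ref{lem:nondegenerate_morphism} requires a fixed nondegenerate triple whose $\phi_n$-images $\theta$-converge nondegenerately, yet $\phi_n$ collapses all finite $X_\omega$-distances from $c(0)$ to zero, so the natural candidates built from $c(0)$, $\omega$, and a third finite point degenerate in the limit. I expect to surmount this by pre-composing $\phi_n$ with a shift (from Section~3.2) taking a fixed reference point to $c(t_n)$, producing a sequence with a nondegenerate $\theta$-limit on a suitable fixed triple. Once the limiting M\"obius automorphism is in hand, Lemma~\ref{lem:unique_line} together with the Busemann coordinate computation delivers the contradiction cleanly.
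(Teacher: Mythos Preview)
The paper does not supply its own proof here; it simply cites \cite{BS1}, Lemma~4.12, and states that the argument carries over unchanged. So there is nothing in the paper to compare against, and the question is whether your argument stands on its own.

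Your forward direction is correct and efficient: the two Ptolemy inequalities on $(x,c(0),c(t_i),c'(t_i))$ combine to the displayed estimate, and after dividing by $|t_i|$ and inserting the Busemann expansions one obtains $b_{\ell'}^+ - b_\ell^+ \equiv b_{\ell'}^+(c(0))$, and likewise for $b^-$.

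The reverse direction has a real gap, and your proposed patch does not close it. The sequence $\phi_n=h_{1/t_n,c(0)}$ admits \emph{no} fixed nondegenerate triple with nondegenerate limiting image: $\phi_n$ fixes $c(0)$ and $\omega$ and sends every other point of $X_\omega$ to $c(0)$, so Lemma~\ref{lem:nondegenerate_morphism} is inapplicable. Pre-composing with a shift $\eta_n$ taking a reference point $p$ to $c(t_n)$ does not help: $\eta_n$ is an isometry, so any fixed third point $q$ lands at bounded distance $|pq|$ from $c(t_n)$, whence $\phi_n(\eta_n(q))$ lies within $|pq|/t_n$ of $c(1)$ and the triple still degenerates.

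What does work is to drop Lemma~\ref{lem:nondegenerate_morphism} and take pointwise limits of the rescaled \emph{lines}. You already have the key computation $b_\ell^+\circ\phi_n=(1/t_n)\,b_\ell^+$; combined with $b_\ell^+=b_{\ell'}^+$ it shows that $\phi_n(\ell')$ is a Ptolemy line Busemann parallel to $\ell$. Parametrize it by $\gamma_n(s):=\phi_n(c'(st_n))$; this is unit speed, $\gamma_n(0)=\phi_n(c'(0))\to c(0)$, and $b_\ell^+(\gamma_n(s))=-s$. For each fixed $s$ the points $\gamma_n(s)$ stay bounded, so $\gamma(s):=\lim_\theta\gamma_n(s)$ defines a unit-speed curve through $c(0)$ with $b_\ell^+\circ\gamma(s)=-s$, and the Ptolemy equality on $\phi_n(\ell')\cup\{\omega\}$ passes to the limit, making $\gamma(\R)$ a Ptolemy line. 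Now Lemma~\ref{lem:unique_line} applies to $\ell$ and $\gamma(\R)$ through $c(0)$, forcing $\gamma=c$; hence $z=\lim_\theta\phi_n(c'(t_n))=\gamma(1)=c(1)$, contradicting $|c(1)\,z_n|\ge\epsilon$.
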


\begin{lemma}[\cite{BS1}, Lemma~4.13]\label{lem:shift_busemann_parallel}
A shift 
$\eta_{xx'}$
moves any Ptolemy line
$\ell$
through
$x$
to a Busemann parallel Ptolemy line 
$\eta_{xx'}(l)$
through
$x'$.
\end{lemma}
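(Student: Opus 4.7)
The plan is to split the statement into two parts: (i) $\eta(\ell)$ is a Ptolemy line through $x'$; (ii) $\ell$ and $\eta(\ell)$ are Busemann parallel.

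For (i), I would observe that $\sigma:=\ell\cup\{\omega\}$ is a Ptolemy circle through $\omega$, and that the M\"obius automorphism $\eta$ preserves cross-ratios and is a homeomorphism, so it maps Ptolemy circles to Ptolemy circles. Since $\eta(\omega)=\omega$, the image $\eta(\sigma)=\eta(\ell)\cup\{\omega\}$ is again a Ptolemy circle through $\omega$; hence $\eta(\ell)$ is a Ptolemy line, and it contains $\eta(x)=x'$.

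For (ii), let $c\colon\R\to\ell$ be a unit-speed parameterization with $c(0)=x$. Because $\eta$ restricts to an isometry of $X_\omega$ fixing $\omega$ and sending $x\mapsto x'$, the composition $c'(t):=\eta(c(t))$ is a unit-speed parameterization of $\eta(\ell)$ with $c'(0)=x'$. By Lemma~\ref{lem:busparallel_sublinear}, it is enough to exhibit a sequence $t_i\to\pm\infty$ with $|c(t_i)c'(t_i)|/|t_i|\to 0$; I would in fact prove the stronger statement that $|c(t)c'(t)|$ remains bounded uniformly in $t$.

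To establish this bound, I would work along the ultrafilter defining $\eta=\lim_\theta\eta_n$, $\eta_n=h'_n\circ h_n$. The homothety $h_n=h_{\lambda_n^{-1},x}$ has its centre on $\ell$, so by property~(2) of homotheties it preserves $\ell$ setwise; combined with $h_n(x)=x$ and the scaling property~(3), an explicit check (using that each defining s-inversion restricts to $t\mapsto r^2/t$ on $\ell$) gives $h_n(c(t))=c(\lambda_n^{-1}t)$, whence $\eta_n(c(t))=h'_n(c(\lambda_n^{-1}t))$. Using that $h'_n$ fixes $x'$ and scales distances by $\lambda_n$, I would estimate $|c(t)-h'_n(c(\lambda_n^{-1}t))|$ and then pass to the $\theta$-limit to bound $|c(t)c'(t)|$. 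The main obstacle is precisely this estimate: the triangle inequality applied to $x$, $x'$, $c(t)$, $\eta_n(c(t))$ only yields a bound linear in $t$. To get a uniform bound one must exploit that $c(\lambda_n^{-1}t)$ lies on a Ptolemy \emph{line}, so that the cross-ratios entering $h'_n$ (accessible through the inversion formula of Lemma~\ref{lem:inv_2}) are tightly controlled and, in the regime $\lambda_n\to 0$, force $\eta_n$ to behave asymptotically like the Euclidean translation $z\mapsto z+(1-\lambda_n)(x'-x)$ from the motivating model. Once this bounded-translation behaviour is extracted, Busemann-parallelism is immediate from Lemma~\ref{lem:busparallel_sublinear}.
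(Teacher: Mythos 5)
Your part (i) is correct, and your reduction of part (ii) to the sublinear-divergence criterion of Lemma~\ref{lem:busparallel_sublinear} is the right strategy. Note, however, that the paper contains no proof of this statement at all: it is imported verbatim from \cite{BS1} (Lemma~4.13) with the remark that the proof ``goes without changes'' in the strong-inversion setting, so the only question is whether your sketch is itself a complete argument.

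It is not, and the gap is exactly where you say it is. The entire content of the lemma beyond formalities is the estimate $|c(t)\,c'(t)|=o(|t|)$, and at that point your text switches from proof to description. You correctly observe that the triangle inequality through $x$ and $x'$ only gives $|c(t)\,\eta_n(c(t))|\le 2|t|+|xx'|$ (since $|x'\eta_n(c(t))|=|xc(t)|=|t|$), which is useless for Lemma~\ref{lem:busparallel_sublinear}; a naive Ptolemy inequality on the quadruple $(c(t),x,\eta_n(c(t)),x')$ is even worse, giving a bound quadratic in $t$. Your proposed remedy --- that the cross-ratios ``force $\eta_n$ to behave asymptotically like the Euclidean translation $z\mapsto z+(1-\lambda_n)(x'-x)$'' --- is circular: the assertion that $\eta_n$ displaces every point by a uniformly bounded amount in a fixed asymptotic direction \emph{is} the Busemann-parallelism being proved, and appealing to the model $\wh\R^n$ assumes the conclusion. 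Closing the gap requires an actual estimate exploiting the Ptolemy \emph{equality} on the circle $\ell\cup\{\omega\}$ for points $c(s)$ with $s\to\pm\infty$, interchanged carefully with the $\theta$-limit in $n$; none of this is present in the sketch, and it is precisely what the citation to \cite{BS1} is carrying. (A smaller point: your identity $h_n(c(t))=c(\lambda_n^{-1}t)$, with the correct sign, needs the observation that each defining s-inversion fixes the two points of $S\cap\ell$ and therefore preserves each ray of $\ell$ emanating from the center; this is where the strong, pointwise fixing of $S$ enters, and it deserves a line of justification rather than the parenthetical you give it.)
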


From Lemma~\ref{lem:unique_line} and 
Lemma~\ref{lem:shift_busemann_parallel} we immediately obtain

\begin{corollary}\label{cor:busparallel_foliation} Given a Ptolemy line
$\ell\subset X_\omega$.
Through any point 
$x\in X_\omega$
there is a unique Ptolemy line
$l_x$
Busemann parallel to
$\ell$.
\qed
\end{corollary}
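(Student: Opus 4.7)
The plan is to prove existence and uniqueness separately, with each part reducing immediately to one of the two cited lemmas.

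For existence, I would pick any basepoint $x_0 \in \ell$ and invoke the construction from the ``Shifts'' subsection: for any two points $x_0, x \in X_\omega$ there is a shift $\eta_{x_0 x}\colon X_\omega \to X_\omega$ which is an isometry fixing $\omega$ and sending $x_0$ to $x$. Then $\ell_x := \eta_{x_0 x}(\ell)$ is a Ptolemy line (since shifts are M\"obius automorphisms preserving $\omega$, they carry Ptolemy circles through $\omega$ to Ptolemy circles through $\omega$, hence Ptolemy lines to Ptolemy lines), and it passes through $x$. By Lemma~\ref{lem:shift_busemann_parallel} applied to the shift $\eta_{x_0 x}$, the image $\ell_x$ is Busemann parallel to $\ell$.

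For uniqueness, suppose $\ell_1, \ell_2 \subset X_\omega$ are two Ptolemy lines through $x$, each Busemann parallel to $\ell$. Since Busemann parallelism is an equivalence relation (two lines sharing the full set of Busemann functions with $\ell$ share them with each other), $\ell_1$ and $\ell_2$ are Busemann parallel and meet at $x$. Lemma~\ref{lem:unique_line}, in the form of its concluding ``In particular'' clause, directly yields $\ell_1 = \ell_2$.

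Since both ingredients are already stated as lemmas, there is essentially no obstacle: the only very mild point to check is that a shift indeed sends Ptolemy lines to Ptolemy lines, which is immediate from the facts that shifts are M\"obius, fix $\omega$, and act as isometries on $X_\omega$. Combining the two steps gives the unique Busemann parallel Ptolemy line $\ell_x$ through each $x \in X_\omega$.
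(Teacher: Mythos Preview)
Your proposal is correct and matches the paper's approach exactly: the paper simply states that the corollary follows immediately from Lemma~\ref{lem:unique_line} and Lemma~\ref{lem:shift_busemann_parallel}, and you have spelled out precisely how---existence via a shift from a point of $\ell$ to $x$ (Lemma~\ref{lem:shift_busemann_parallel}) and uniqueness via the ``In particular'' clause of Lemma~\ref{lem:unique_line}.
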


\section{Symmetries w.r.t. horospheres}

In this section we construct a symmetry with respect to a horosphere. 

Fix 
$\omega\in X$,
a Ptolemy line
$\ell\subset X_\omega$,
and let
$c:\R\to X_\omega$
be a unit speed parameterization of
$\ell$.
For
$t>0$,
the metric sphere
$S_t=\set{x\in X_\omega}{$|xc(t)|=t$}$
passes through
$z=c(0)$
and lies between
$\omega$
and
$c(t)$.
By (sI), there is an s-inversion
$\phi_t=\phi_{\omega,c(t),S_t}:X\to X$.
By the compactness of
$X$,
s-inversions
$\phi_t$
subconverge as
$t\to\infty$
to a map
$\phi_\infty:X\to X$. 
Note that $\phi_\infty(\omega)=\omega$
because
$\phi_t(c(t))=\omega$
and
$c(t)\to\omega$
as
$t\to\infty$.

\begin{lemma}
Let $x\in H_z$,
where
$H_z\subset X_\omega$
is the horosphere through
$z\in\ell$
of the Busemann function 
$b^+(y)=\lim\limits_{t\to\infty}(|yc(t)|-t)$, $y\in X_\omega$. 
Then $\phi_\infty(x)=x$. 
\end{lemma}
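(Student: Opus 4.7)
The idea is to approximate $x$ by points $y_t \in S_t$, which are fixed pointwise by $\phi_t$ (by property~(3) of a strong s-inversion), and then use the near-isometry of $\phi_t$ on $X_\omega$ to conclude that $\phi_t(x) \to x$.

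First I would record the identities obtained by applying Lemmas~\ref{lem:inv_1} and~\ref{lem:inv_2} with $c(t) \in X_\omega$ in the role of $o$ and $\omega$ as the infinitely remote point (legitimate by the symmetry of the notion of metric sphere between two points recalled in the excerpt): for all $y, y' \in X_\omega$,
\[
|c(t)\,\phi_t(y)| \;=\; t^2/|c(t)y|, \qquad |\phi_t(y)\,\phi_t(y')| \;=\; t^2\,|yy'|/(|c(t)y|\cdot|c(t)y'|).
\]
Since $b^+(y)$ is finite for every $y \in X_\omega$, we have $|c(t)y|/t \to 1$, so in particular the second identity shows that $\phi_\infty$ restricted to $X_\omega$ is an isometry.

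Next I would produce $y_t \in S_t$ converging to $x$. By Corollary~\ref{cor:busparallel_foliation}, through $x$ passes a unique Ptolemy line $\ell_x$ Busemann-parallel to $\ell$; parameterize it by $c_x\colon \R \to \ell_x$ at unit speed with $c_x(0)=x$ and the orientation chosen so that $b^+(c_x(s))=-s$ (consistent since $b^+(x)=0$ and $\ell_x, \ell$ share Busemann functions). The decisive observation is that for each fixed $s \in \R$,
\[
|c_x(s)\,c(t)| - t \;\longrightarrow\; b^+(c_x(s)) = -s \qquad\text{as } t \to \infty.
\]
At $s=0$ this gives $\epsilon_t := |xc(t)| - t \to 0$, while for any fixed $s_0>0$ and $t$ large enough one has $|c_x(s_0)c(t)| - t < -s_0/2 < 0$ and $|c_x(-s_0)c(t)| - t > s_0/2 > 0$. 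Continuity of $s \mapsto |c_x(s)\,c(t)|$ together with the intermediate value theorem then yields some $s_t \in [-s_0,s_0]$ with $|c_x(s_t)\,c(t)| = t$, so that $y_t := c_x(s_t) \in S_t$. Since $s_0>0$ was arbitrary, $s_t \to 0$, and in particular $|xy_t| = |s_t| \to 0$.

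Finally, since $\phi_t(y_t) = y_t$, the distortion formula gives
\[
|\phi_t(x)\,y_t| \;=\; |\phi_t(x)\,\phi_t(y_t)| \;=\; \frac{t^2\,|xy_t|}{|c(t)x|\cdot t} \;=\; \frac{t\,|xy_t|}{|c(t)x|} \;\longrightarrow\; 0,
\]
so $|\phi_t(x)\,x| \le |\phi_t(x)\,y_t| + |y_t\,x| \to 0$, whence $\phi_\infty(x) = x$. The main difficulty lies in the IVT step: one must verify for each large $t$ (not merely in the limit) that the continuous function $s \mapsto |c_x(s)c(t)| - t$ vanishes at some point arbitrarily close to $s=0$, regardless of the sign of $\epsilon_t$; this is settled by applying the Busemann asymptotics above on both sides of $s=0$.
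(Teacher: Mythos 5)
Your proof is correct and follows essentially the same route as the paper's: intersect the Busemann-parallel line $\ell_x$ through $x$ with the sphere $S_t$, observe that the intersection point is fixed by $\phi_t$ and lies close to $x$, and apply Lemma~\ref{lem:inv_2} to bound $|\phi_t(x)x|$. The only difference is in the order of quantifiers: the paper fixes $x'=c'(\epsilon)$ on the parallel line and finds some $t$ with $x'\in S_t$ via monotonicity of $t\mapsto |x'c(t)|-t$, whereas you fix a large $t$ and locate $y_t\in S_t\cap\ell_x$ by the intermediate value theorem, which has the mild advantage of yielding convergence $\phi_t(x)\to x$ for the full family $t\to\infty$ rather than only along a subsequence.
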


\begin{proof}
 Since
$|zc(t)|=t$
for 
$t\ge 0$,
we have
$b^+(z)=0$.
Let 
$\ell_x$ 
be a line 
through
$x$
Busemann parallel to 
$\ell$  
and let $c':\R\to X_\omega$
be its unit speed parameterization with $c'(0)=x$
such that 
$b^+$
is the Busemann function associated with the ray
$c'([0,\infty))$.
Fix $\epsilon>0$ and let $x'=c'(\epsilon)$.  
Note that the function $|x'c(t)|-t$ is decreasing and tends to $b^+(x')=-\epsilon$. 
On the other hand $|x'c(0)|>0$. 
It means that there exists $t>0$ such that $|x'c(t)|-t=0$. 
Let $x_t=\phi_t(x)$. 
Since
$\phi_t(x')=x'$,
we have by Lemma~\ref{lem:inv_2}  
$$
|x_tx'|=t^2\frac{|xx'|}{|c(t)x|\cdot |c(t)x'|}=\frac{t\epsilon}{|c(t)x|}.
$$
Note that the function $|xc(t)|-t$ is decreasing and tends to $b^+(x)=0$. 
It means that $|xc(t)|\geq t$ and $|x_tx'|\leq \epsilon$. 
It follows that $|xx_t|\leq |xx'|+|x'x_t|\leq 2\epsilon$. 
Choosing $\epsilon\to 0$ we see that $\phi_t(x)\to x$ and then $\phi_\infty(x)=x$. 
\end{proof}

Now we show that
$\phi_\infty$
is an isometry of
$X_\omega$
which in addition reflects the Ptolemy line 
$\ell$
in
$z$.
For each 
$x$, $y\in X_\omega$
and every sufficiently large
$t>0$,
we have by Lemma~\ref{lem:inv_2}
$$|\phi_t(x)\phi_t(y)|=\frac{t^2|xy|}{|xc(t)||yc(t)|},$$
and
$|xc(t)|=t+b^+(x)+o(1)$, $|yc(t)|=t+b^+(y)+o(1)$.
Thus
$|\phi_\infty(x)\phi_\infty(y)|=|xy|$
for all
$x$, $y\in X_\omega$,
i.e.,
$\phi_\infty$
is an isometry. It preserves the Ptolemy line 
$\ell$
because every 
$\phi_t$
preserves the Ptolemy circle
$\sigma=l\cup\omega$,
and it reflects
$\ell$
in 
$z$
because
$\phi_\infty(z)=z$
and every
$\phi_t$
is an s-inversion of
$\sigma$.

\section{Proof of Theorem \ref{main:thm}}

\subsection{Some metric relations}

Recall that a Ptolemy space $X$ is said to be {\em Busemann flat} if for every Ptolemy circle
$\sigma\subset X$ and every point $\omega\in\sigma$, we have
$b^+ + b^- \equiv const$
for opposite Busemann functions $b^\pm \colon X_\omega\to \R$ associated with Ptolemy line
$\sigma_\omega$, see \cite{BS1}~sect.3.2.

\begin{lemma}\label{lem:flat}
 $X$ is Busemann flat. 
\end{lemma}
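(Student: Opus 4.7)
The plan is to exploit the horospherical symmetry $\phi_\infty$ built in Section~4, and then slide its basepoint along $\ell$, to upgrade the trivial identity $b^+ + b^- = 0$ on $\ell$ itself to all of $X_\omega$.

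Fix $\omega\in\sigma$, let $\ell=\sigma_\omega$, and choose a unit-speed parameterization $c:\R\to X_\omega$. Normalize the Busemann functions as
$b^+(y)=\lim_{t\to+\infty}(|yc(t)|-t)$ and $b^-(y)=\lim_{t\to-\infty}(|yc(t)|+t)$, so that both vanish at $z:=c(0)$ and satisfy $b^+(c(s))=-s$, $b^-(c(s))=s$ on $\ell$. The goal is $b^++b^-\equiv 0$ on $X_\omega$.

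The central step is to invoke the symmetry $\phi_\infty:X_\omega\to X_\omega$ produced in Section~4 at the basepoint $z$: it is an isometry fixing $\omega$, reflecting $\ell$ at $z$ (so $\phi_\infty(c(t))=c(-t)$), and pointwise fixing the horosphere $H_z=\{b^+=0\}$. Using the isometry property and the reflection, $|\phi_\infty(y)c(t)|=|y\,c(-t)|$, and substituting $s=-t$ in the Busemann limit yields $b^+\circ\phi_\infty=b^-$. Evaluating at any $x\in H_z$, where $\phi_\infty(x)=x$, forces $b^-(x)=b^+(x)=0$, so $b^++b^-=0$ on the zero-horosphere of $b^+$.

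To reach every other horosphere I would slide the basepoint: replace $c$ by $c_s(u):=c(u+s)$ and rerun the Section~4 construction. A direct computation gives the reparameterized Busemann functions $\tilde b^+=b^++s$ and $\tilde b^-=b^--s$, so the \emph{sum} is unchanged while the zero-horosphere of $\tilde b^+$ is precisely $\{b^+=-s\}$. The previous step applied to the shifted setup therefore yields $b^++b^-=0$ on $\{b^+=-s\}$ for every $s\in\R$, and since these level sets exhaust $X_\omega$, the result follows.

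The main technical point is the identity $b^+\circ\phi_\infty=b^-$, which rests on the reflection and isometry properties already established for $\phi_\infty$; apart from this, everything reduces to bookkeeping about how $b^\pm$ transform under reparameterization. I expect no new obstacle in transporting the Section~4 construction to an arbitrary basepoint $z'\in\ell$, since that construction is intrinsic to the pair $(\omega,\ell)$ together with a unit-speed parameterization.
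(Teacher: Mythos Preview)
Your proof is correct and follows essentially the same approach as the paper: both use the horospherical symmetry $\phi_\infty$ to obtain $b^+\circ\phi_\infty=b^-$ and hence $b^+=b^-=0$ on the fixed horosphere $H_z$. The only cosmetic difference is in the last step, where the paper finishes in one line by noting that $H_z$ is then also a horosphere of $b^-$ (so $b^++b^-\equiv\text{const}$ immediately), whereas you slide the basepoint along $\ell$ to cover every level set of $b^+$ explicitly---an equally valid way to conclude.
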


\begin{proof}
 Let $\ell\subset X_\omega$ be a Ptolemy line,  
and let
$c:\R\to X_\omega$
be a unit speed parameterization of
$\ell$. 
Consider the horosphere $H_o$ through $o=c(0)$ of the Busemann function 
$b^{+}(x)=\lim\limits_{t\to\infty}(|xc(t)|-t)$, $x\in X_\omega$. 
Let $b^{-}(x)=\lim\limits_{t\to\infty}(|xc(-t)|-t)$, $x\in X_\omega$, and let 
$\phi$ be the symmetry w.r.t. $H_o$. 
Note that if $x'=\phi(x)$, where $x,x'\in X_\omega$, then 
$b^{+}(x)=b^{-}(x')$. 
Indeed, 
\begin{multline*}
b^{-}(x')=
\lim\limits_{t\to\infty}(|x'c(-t)|-t)=\lim\limits_{t\to\infty}(|\phi(x)\phi(c(t))|-t)\\
=\lim\limits_{t\to\infty}(|xc(t)|-t)
=b^{+}(x).
 \end{multline*}
It follows that 
$b^{+}(z)=b^{-}(z)$
for every
$z\in H_o$. 
It means that $H_o$ is also a horosphere of the Busemann function 
$b^{-}$ and then   $b^+ + b^- \equiv const$. 
\end{proof}
\begin{corollary}
For each horosphere $H$ 
of the Busemann function 
$b^{+}$ the set $\phi(H)$ is also a horosphere 
of the Busemann function 
$b^{+}$, where $\phi$ is the symmetry w.r.t. $H_o$.  \qed 
\end{corollary}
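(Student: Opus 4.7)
The plan is to combine two facts already in hand: the end of Section~4 shows that $\phi$ is an isometry of $X_\omega$ reflecting the Ptolemy line $\ell$ in $o$, so that $\phi(c(t))=c(-t)$ for all $t\in\R$; and Lemma~\ref{lem:flat} gives Busemann flatness, $b^{+}+b^{-}\equiv C$ on $X_\omega$ for some constant $C$. Together these should force $b^{+}\circ\phi$ to be an affine function of $b^{+}$, which automatically sends level sets to level sets.

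First I would evaluate $b^{+}\circ\phi$ pointwise. For $x\in X_\omega$, inserting $\phi(c(-t))=c(t)$ inside the defining limit and using that $\phi$ is an isometry gives
\[
b^{+}(\phi(x))=\lim_{t\to\infty}\bigl(|\phi(x)\,\phi(c(-t))|-t\bigr)=\lim_{t\to\infty}\bigl(|x\,c(-t)|-t\bigr)=b^{-}(x).
\]
By Lemma~\ref{lem:flat} this rewrites as $b^{+}(\phi(x))=C-b^{+}(x)$.

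Consequently, if $H=\{x\in X_\omega:b^{+}(x)=a\}$ is the horosphere of $b^{+}$ at level $a$, then $\phi(H)\subset\{y:b^{+}(y)=C-a\}$. Since $\phi$ is the limit of the involutions $\phi_t$ it is itself an involution, hence bijective on $X_\omega$, and applying the same inclusion with $C-a$ in place of $a$ gives the reverse inclusion. Thus $\phi(H)=\{y:b^{+}(y)=C-a\}$ is a horosphere of $b^{+}$.

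I do not anticipate any real obstacle here; the only point that deserves a line of care is the involutivity of $\phi$ at the limit, which is immediate from the joint continuity of the metric and the fact that each $\phi_t$ squares to the identity.
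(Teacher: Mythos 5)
Your argument is correct and is essentially the one the paper intends: the identity $b^{+}\circ\phi=b^{-}$ is exactly the computation carried out inside the proof of Lemma~\ref{lem:flat}, and combining it with Busemann flatness to get $b^{+}\circ\phi=C-b^{+}$ is precisely why the paper marks the corollary as immediate. The only point stated a bit loosely is the involutivity of $\phi=\phi_\infty$ at the limit, but all you actually need is that $\phi$ is a bijection of $X_\omega$ (it is a M\"obius automorphism fixing $\omega$), which already yields the reverse inclusion $\{b^{+}=C-a\}\subset\phi(H)$.
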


\begin{lemma}
Let 
$\ell$, $\ell'\subset X_\omega$
be Busemann parallel lines,
$\phi:X_\omega\to X_\omega$
the symmetry which reflects
$\ell$
at
$o\in\ell$.
Then
$\phi$
reflects
$\ell'$
at 
$o'=H_o\cap\ell'$,
where
$H_o$
the horosphere of
$\ell$
through
$o$.
\end{lemma}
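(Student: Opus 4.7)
The plan is to establish $\phi(\ell')=\ell'$ and then identify the induced isometric involution of $\ell'$ as the reflection at $o'$.

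First, since $\phi$ is a Möbius automorphism fixing $\omega$, it sends Ptolemy lines in $X_\omega$ to Ptolemy lines in $X_\omega$, so $\phi(\ell')$ is a Ptolemy line. As $\phi$ fixes $H_o$ pointwise and $o'\in H_o$, we have $\phi(o')=o'$, so $\phi(\ell')$ passes through $o'$.

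The heart of the argument is to show that $\phi(\ell')$ is Busemann parallel to $\ell$. The proof of Lemma~\ref{lem:flat} gives the identity $b^+\circ\phi=b^-$, and applying $\phi$ once more yields $b^-\circ\phi=b^+$. Since $\phi$ is an involutive isometry of $X_\omega$, the Busemann functions of $\phi(\ell')$ are obtained by precomposing those of $\ell'$ with $\phi$. Because $\ell'$ is Busemann parallel to $\ell$, the Busemann functions of $\ell'$ are the translates of $b^+$ and $b^-$; the identities above show that $\phi$ swaps these two families, so the Busemann functions of $\phi(\ell')$ are again the translates of $b^+$ and $b^-$. Hence $\phi(\ell')$ and $\ell$ share Busemann functions, so $\phi(\ell')$ is Busemann parallel to $\ell$. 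By Corollary~\ref{cor:busparallel_foliation}, applied at the point $o'$, this forces $\phi(\ell')=\ell'$.

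It remains to check that $\phi|_{\ell'}$, an isometric involution of $\ell'\cong\R$ fixing $o'$, is the reflection at $o'$ rather than the identity. Suppose it were the identity; then for every $x\in\ell'$ we would have $b^-(x)=b^+(\phi(x))=b^+(x)$, and combined with Busemann flatness $b^++b^-\equiv\mathrm{const}$ this would force $b^+$ to be constant along $\ell'$, contradicting the fact that $b^+$ is a Busemann function of $\ell'$ and hence strictly monotone along any unit-speed parameterization.

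The main obstacle is the middle step: correctly transporting the Busemann parallelism through $\phi$ by combining the isometry property of $\phi$ with the identity $b^+\circ\phi=b^-$ from the proof of Lemma~\ref{lem:flat}, and then invoking the uniqueness in Corollary~\ref{cor:busparallel_foliation}. The remaining steps are essentially formal.
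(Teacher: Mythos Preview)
Your proof is correct and follows the same overall strategy as the paper: show that $\phi(\ell')$ is a Ptolemy line through $o'$ which is Busemann parallel to $\ell$, and then invoke uniqueness of the Busemann parallel line through a given point. The paper reaches Busemann parallelism in one stroke by observing that any isometry preserves the relation, so $\phi(\ell')$ is Busemann parallel to $\phi(\ell)=\ell$; you instead route through the swap identity $b^+\circ\phi=b^-$ from Lemma~\ref{lem:flat}, which is a perfectly valid (if slightly more explicit) way to see the same thing. Your final paragraph, ruling out $\phi|_{\ell'}=\id$ via Busemann flatness, is a useful addition that the paper's proof leaves implicit.
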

\begin{proof}
$H_o$
is the fixed point set of
$\phi$
and
$\phi(\ell')$
is Busemann parallel to
$\phi(\ell)=\ell$.
Thus by Lemma~\ref{lem:unique_line},
$\phi(\ell')=\ell'$.
\end{proof}

% ************************************************
% 
% Let  
% $c$, $c':\R\to X_\omega$
% be unit speed parameterizations of Busemann parallel lines
% $\ell$, $\ell'\subset X_\omega$
% respectively, and a common Busemann function
% $b:X_\omega\to\R$
% such that
% $b\circ c(t)=b\circ c'(t)=-t$
% for all
% $t\in\R$.
% Let  $H_o=b^{-1}(0)$ be  the horosphere through $o=c(0)$, 
% and let 
% $\phi$ be the symmetry w.r.t. $H_o$. 
% Note that $\phi(\ell')$ is Busemann parallel to 
% $\ell$. 
% Indeed, let 
% $b'$ be a Busemann function of $\phi(\ell')$ 
% such that 
% $b'\circ c''(t)=-t$, where $c''(t)=\phi(c'(-t))$,
% $x\in X_\omega$ and $x'=\phi(x)$. 
% Then 
%  
% \begin{multline*}
% b'(x)=\lim\limits_{t\to\infty}(|xc''(t)|-t)=
% \lim\limits_{t\to\infty}(|\phi(x')\phi(c'(-t))|-t)\\
% =\lim\limits_{t\to\infty}(|x'c'(-t)|-t)=
% \lim\limits_{t\to\infty}(|x'c(-t)|-t)\\ 
% =\lim\limits_{t\to\infty}(|\phi(x)\phi(c(t))|-t)=
% \lim\limits_{t\to\infty}(|xc(t)|-t)=
% b(x).
%  \end{multline*}
% which implies that $\phi(\ell')=\ell'$
% 
% **************************************************

\begin{lemma}\label{lem:3eq}
Let  
$\ell$, $\ell'$
be Busemann parallel lines in $X_\omega$, 
and let $x, y\in \ell$, $x', y'\in \ell'$ such that 
$b(x)=b(x')$, $b(y)=b(y')$, where $b$ 
is a common Busemann function of $\ell$ and $\ell'$. 
Then $|xy|=|x'y'|$, $|xx'|=|yy'|$, $|xy'|=|yx'|$ and $|x'y|\geq |xx'|$. 
\end{lemma}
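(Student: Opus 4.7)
The plan is to establish the three equalities using the two horospherical symmetries developed in the previous section, and then derive the inequality by applying Ptolemy's inequality to a collinear triple produced by a third such symmetry.

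First I would argue that $|xy|=|x'y'|$. Since $\ell$ is isometric to $\R$ and $b$ is a Busemann function of $\ell$, the restriction $b|_\ell$ is affine of unit slope, so $|xy|=|b(x)-b(y)|$. By Busemann parallelism $b$ is also a Busemann function of $\ell'$, and the same reasoning gives $|x'y'|=|b(x')-b(y')|$. The matched-level hypotheses then force $|xy|=|x'y'|$.

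For $|xx'|=|yy'|$ and $|xy'|=|yx'|$, I would invoke the symmetry $\phi$ with respect to the horosphere $H_m$ through the midpoint $m$ of $x,y$ on $\ell$. The preceding lemma says $\phi$ reflects $\ell$ at $m$ and $\ell'$ at $m':=H_m\cap\ell'$. Since $b|_{\ell'}$ is affine, $m'$ is the midpoint of $x',y'$ on $\ell'$; hence $\phi$ swaps $x\leftrightarrow y$ and $x'\leftrightarrow y'$ simultaneously. Applying the isometry $\phi$ to the pairs $(x,x')$ and $(x,y')$ yields the two equalities.

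The inequality $|x'y|\geq|xx'|$ is the substantive content; my plan is to pass to a collinear triple on $\ell'$ via a second horospherical reflection and then invoke Ptolemy. Let $\psi$ be the symmetry with respect to $H_y$; it fixes $y$ and $y'$ pointwise (they share a $b$-level) and reflects $\ell'$ at $y'$. Set $\bar x':=\psi(x')\in\ell'$, so that $y'$ is the midpoint of $x',\bar x'$ on $\ell'$ and $|x'\bar x'|=2|x'y'|$. Since $\psi(y)=y$, the isometry property gives $|y\bar x'|=|yx'|$. Applying the Ptolemy inequality to the quadruple $(x',y',\bar x',y)$ in the form
$$|x'\bar x'|\cdot|y'y|\;\leq\;|x'y'|\cdot|\bar x' y|+|y'\bar x'|\cdot|x'y|$$
and substituting the computed values yields $2|xy|\cdot|yy'|\leq 2|xy|\cdot|yx'|$, which (assuming $x\neq y$, the degenerate case being trivial) collapses to $|yy'|\leq|yx'|$; combined with $|yy'|=|xx'|$ this is the claim. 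The trickiest point will be spotting this application: the chosen Ptolemy inequality is in effect the midpoint convexity of the function $z\mapsto|yz|$ along $\ell'$ at the midpoint $y'$ of the symmetric pair $x',\bar x'$, and the horospherical reflection $\psi$ is what guarantees that $x'$ and $\bar x'$ really are equidistant from $y$.
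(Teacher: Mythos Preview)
Your proof is correct and follows essentially the same strategy as the paper: the first equality via the Busemann function, the next two via the midpoint horospherical symmetry, and the inequality by manufacturing a symmetric quadruple with a further horospherical reflection and applying Ptolemy. The only cosmetic difference is that for the inequality the paper reflects $y$ through $H_x$ (obtaining $y''\in\ell$ with $x$ as midpoint) and applies Ptolemy to $(x,x',y,y'')$, whereas you reflect $x'$ through $H_y$ (obtaining $\bar x'\in\ell'$ with $y'$ as midpoint) and apply Ptolemy to $(x',y',\bar x',y)$; the two computations are mirror images of one another. Note, incidentally, that in the course of its proof the paper also records the auxiliary inequality $|xy|^2+|xx'|^2\geq|yx'|^2$ from Ptolemy on $(x,x',y,y')$, tagged for later use; this is not part of the lemma's statement, so your omission of it is not a gap here.
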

\begin{proof}
First equality is obvious, because 
$$
|xy|=|b(x)-b(y)|=|b(x')-b(y')|=|x'y'|.
$$
To prove the other two equalities 
consider the midpoint
$z\in \ell$ 
between
$x$, $y$, 
that is,
$|xz|=|zy|$. 
Let $H_x, H_y, H_z$ be horospheres of $b$ through $x, y, z$ respectively, 
and let $\phi$ be the symmetry w.r.t. $H_z$ such that $\phi(\ell)=\ell$. 
Note that $\phi(x)=y$ and $\phi(\ell')=\ell'$.  
It follows that $\phi(H_x)=H_y$.  
Moreover $\phi(x')=y'$ and $\phi(y')=x'$.  
Then we have $|xx'|=|yy'|$ and $|xy'|=|yx'|$.

Applying the Ptolemy inequality  
$|xy|\cdot|x'y'|+|xx'|\cdot|yy'|\geq |xy'|\cdot|yx'|$  
to the quadruple $(x,x',y,y')$,
we have 
\begin{equation}
|xy|^2+|xx'|^2\geq |yx'|^2. \tag{$\Diamond$}\label{diamond}\\
\end{equation}

On the other hand if $y''$ is symmetric to $y$ w.r.t. $H_x$ 
then $|xy''|=|xy|$ and $|x'y''|=|x'y|$. 
Applying the Ptolemy inequality to the quadruple $(x,x',y,y'')$, 
we have $|x'y|\cdot|xy''|+|x'y''|\cdot|xy|\geq |xx'|\cdot |yy''|$. 
It follows that $2|xy|\cdot|x'y|\geq2|xy|\cdot|xx'|$. 
Thus $|x'y|\geq |xx'|$. 
\end{proof}

Fix $a>0$ and let $\ell\in X_\omega$ be a Ptolemy line. 
Consider  $x,y\in\ell$ such that $|xy|=a/2$. 
Let $H_x$ and $H_y$ be horospheres through $x$ and $y$, and  
let $\phi_x$ and $\phi_y$ be the symmetries w.r.t. $H_x$ and $H_y$. 
Consider an isometry $\phi_y\circ \phi_x$ and note that it moves 
every point along a line Busemann parallel to $\ell$ at the distance $a$. 
We call such an isometry {\em $a$-shift along $\ell$} 
and denote it by $\eta_{a,\ell}$. 
Let $\ell'$ be a Ptolemy line (which is not necessarily Busemann parallel to $\ell$). 
It follows from Lemma~\ref{lem:busparallel_sublinear}
that $\ell'$ and $\eta_{a,\ell}(\ell')$ are Busemann parallel. 
It means that if $H_z$ is the horosphere w.r.t. $\ell'$ through $z$ 
then $\eta_{a,\ell}(H_z)$ is the horosphere  w.r.t. $\ell'$ through $\eta_{a,\ell}(z)$.

\subsection{Existence of non parallel lines}

Assume that $X$ is not M\"obius equivalent to $\wh\R$. 
\begin{lemma}\label{lem:nonparallellines}
For each $\omega,\omega'\in X$  
there exist distinct Ptolemy lines $\ell, \ell'\in X_\omega$ 
such that $\ell\cap \ell'=\{\omega'\}$.  
\end{lemma}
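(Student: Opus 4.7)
The plan is to produce a second Ptolemy line through $\omega'$ by shifting into place some Ptolemy line that is \emph{not} Busemann parallel to $\ell$. Using (E) together with the transitive action of the M\"obius group on ordered pairs of distinct points (obtained by composing shifts of Section~3 with s-inversions swapping any two chosen points), I first produce a Ptolemy circle $\sigma$ through $\omega,\omega'$ and set $\ell=\sigma\cap X_\omega$; this is the first Ptolemy line through $\omega'$. Notice that any prospective second Ptolemy line $\ell'\subset X_\omega$ through $\omega'$ automatically satisfies $\ell\cap\ell'=\{\omega'\}$: the completions $\sigma$ and $\sigma'=\ell'\cup\{\omega\}$ are distinct Ptolemy circles sharing $\omega,\omega'$, so by Corollary~\ref{cor:ptsirctwopoints} they share no third point.

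The key construction is as follows. Suppose there exists some Ptolemy line $m\subset X_\omega$ not Busemann parallel to $\ell$. Pick $p\in m$ with $p\ne\omega'$ (if $\omega'\in m$ we are already done) and apply the shift $\eta_{p\omega'}$, which fixes $\omega$ and sends $p$ to $\omega'$. By Lemma~\ref{lem:shift_busemann_parallel} the image $\eta_{p\omega'}(m)$ is a Ptolemy line through $\omega'$ Busemann parallel to $m$. Since Busemann parallelism is transitive and $m$ is not Busemann parallel to $\ell$, neither is $\eta_{p\omega'}(m)$; in particular $\eta_{p\omega'}(m)\ne\ell$, giving the desired second line.

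Thus everything reduces to exhibiting a Ptolemy line in $X_\omega$ not Busemann parallel to $\ell$; this is the main obstacle. Assume for contradiction that every Ptolemy line in $X_\omega$ is Busemann parallel to $\ell$. Then the only Ptolemy circles through $\omega$ are the completions of the leaves of the Busemann parallel foliation from Corollary~\ref{cor:busparallel_foliation}, and by M\"obius transitivity the same rigidity transfers to every base point: through any two distinct points of $X$ there is a unique Ptolemy circle. Since $X\ne\sigma$ by hypothesis, I pick $q\in X\setminus\sigma$ and apply the s-inversion $\phi$ swapping $\omega$ with $q$: the image $\phi(\sigma)$ is a Ptolemy circle through $q$, and comparing it with the unique Ptolemy circle through $\omega,q$ (the parallel leaf $\ell_q\cup\{\omega\}$) should produce a Ptolemy circle through $\omega$ whose trace on $X_\omega$ is not Busemann parallel to $\ell$. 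Making this last comparison rigorous — essentially showing that the ``unique Ptolemy circle through two points'' property together with the symmetries $\phi_\infty$ of Section~4 and Corollary~\ref{cor:ptsirctwopoints} forces $X$ to collapse onto $\sigma$, contradicting $X\not\cong\wh\R$ — is where the bulk of the work lies.
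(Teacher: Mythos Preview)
Your reduction is sound: once you have one line $\ell$ through $\omega'$ and some Ptolemy line $m\subset X_\omega$ not Busemann parallel to $\ell$, a shift carries $m$ to a second line through $\omega'$, and Corollary~\ref{cor:ptsirctwopoints} forces the intersection to be $\{\omega'\}$. The paper follows the same outline here.

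The gap is exactly where you flag it. Your contradiction argument is only a sketch, and the specific mechanism you propose does not yet produce anything. If $\phi$ is an s-inversion swapping $\omega$ and $q$ with $q\notin\sigma$, then $\phi(\sigma)$ is a Ptolemy circle through $q$ but \emph{not} through $\omega$; in $X_\omega$ it is a compact circle, not a Ptolemy line, so there is nothing to compare to $\ell$ in terms of Busemann parallelism. The leaf $\ell_q\cup\{\omega\}$ does pass through $\omega$ and $q$, but you have given no relation between it and $\phi(\sigma)$ beyond the shared point $q$. Turning ``unique circle through two points $\Rightarrow X\cong\wh\R$'' into an actual argument using only what is available at this stage of the paper (in particular, without Proposition~\ref{pro:horospere_EI}, which \emph{uses} this lemma) would require substantial further work.

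The paper avoids the contradiction route entirely with a short direct construction. Pick $x'\in X_\omega\setminus\ell$ lying on the horosphere $H$ of $\ell$ through $c(0)$; then by Lemma~\ref{lem:3eq} the points $z=c(1)$ and $z'=c(-1)$ satisfy $|x'z|=|x'z'|=:r$. The s-inversion $\phi=\phi_{x',\omega,S_r(x')}$ fixes $z,z'$ (they lie on $S_r(x')$) and sends $\omega$ to $x'\notin\ell$, so $\phi(\sigma)$ is a Ptolemy circle distinct from $\sigma$ meeting $\sigma$ in exactly $\{z,z'\}$. Passing to $X_z$, the traces $\sigma\setminus\{z\}$ and $\phi(\sigma)\setminus\{z\}$ are two distinct Ptolemy lines through $z'$, hence not Busemann parallel; one more round of taking Busemann-parallel representatives and changing the infinitely remote point brings these to $X_\omega$ and then through $\omega'$. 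This is a two-line computation once you see where to center the inversion, and it replaces the whole ``bulk of the work'' you anticipated.
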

\begin{proof}
First of all, we find two Ptolemy circles with exactly two common points. 
Let $\sigma\subset X$ be a Ptolemy circle and let $\omega\in \sigma$. 
Since 
$X$ is not M\"obius equivalent to $\wh\R$ 
there is 
$x'\in X\setminus \sigma$.
Let  
$c:\R\to X_\omega$
be a unit speed parameterization of the Ptolemy line 
$\ell=\sigma\setminus\omega$ such that 
the horosphere $H$ of
$\ell$
through $c(0)$ contains $x'$. 
Let $z=c(1)$, $z'=c(-1)$ and $|x'z|=|x'z'|=r$. 
Consider an s-inversion $\phi$ w.r.t. $x'$, $\omega$ and 
the metric sphere $S_r=\set{x\in X_\omega}{$|x'x|=r$}$. 
It follows from Lemma~\ref{lem:inv_2} that the image $\phi(\ell)$ is a Ptolemy circle  
which intersect $\ell$ in two points $z$ and $z'$.  

Next let 
$\sigma_1$, $\sigma_2$
be the Ptolemy circles described above, 
$\sigma_1\cap \sigma_2=\{z,z'\}$. 
The lines 
$\ell_{1,z'}=\sigma_1\setminus z$, $\ell_{2,z'}=\sigma_2\setminus z\subset X_z$ 
through 
$z'$
are not Busemann parallel.  
Let $\ell_{1,\omega}, \ell_{2,\omega}$ 
be the lines in
$X_z$
through 
$\omega$
which are Busemann parallel to 
$\ell_{1,z'}, \ell_{2,z'}$ respectively. 
Note that 
$\ell'_1=(\ell_{1,\omega}\setminus\{\omega\})\cup \{z\}$ 
and
$\ell'_2=(\ell_{2,\omega}\setminus\{\omega\})\cup \{z\}$ 
are Ptolemy lines in $X_\omega$. 
Finally, the Ptolemy lines  
$\ell_1, \ell_2$ 
through 
$\omega'$ 
Busemann parallel to 
$\ell'_1, \ell'_2$ respectively
are distinct.
\end{proof}

\subsection{Homotheties preserve a foliation by horospheres}

Let 
$c:\R\to X_\omega$
be a unit speed parameterization of a Ptolemy line
$\ell\subset X_\omega$,   
$o=c(0)$, $z\in\ell$ 
and 
$H_z$ the horosphere w.r.t. $\ell$ through $z$. 

\begin{lemma}
Let $h$ be a homothety with the center $o$.  Then $h(H_z)$ 
is the horosphere w.r.t. $\ell$ through $h(z)$.
\end{lemma}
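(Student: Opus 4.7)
The plan is to show that the homothety $h=h_{\lambda,o}$ transforms the Busemann function $b^+$ of $\ell$ by a simple scaling rule, from which the identity $h(H_z)=H_{h(z)}$ follows immediately.

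First I would pin down how $h$ acts on $\ell$. By property~(2) of homotheties, $h$ preserves the Ptolemy circle $\sigma=\ell\cup\{\omega\}$, and since $h(\omega)=\omega$ we conclude $h(\ell)=\ell$. By property~(3), on $X_\omega$ the map $h$ is a similarity with coefficient $\lambda$ fixing $o=c(0)$, so its restriction to $\ell\cong\R$ must be either $c(t)\mapsto c(\lambda t)$ or $c(t)\mapsto c(-\lambda t)$.

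Next I would derive the scaling rule for Busemann functions. Using $|h(u)h(v)|=\lambda|uv|$ and the identity $c(t)=h(c(\epsilon t/\lambda))$ with $\epsilon\in\{\pm 1\}$ the sign above, a change of variable $s=\epsilon t/\lambda$ in
\[
b^+(h(x))=\lim_{t\to\infty}\bigl(|h(x)c(t)|-t\bigr)=\lim_{t\to\infty}\bigl(\lambda|xc(\epsilon t/\lambda)|-t\bigr)
\]
yields $b^+(h(x))=\lambda b^+(x)$ when $\epsilon=+1$ and $b^+(h(x))=\lambda b^-(x)$ when $\epsilon=-1$. In particular the same formula with $x$ replaced by $z$ holds for $b^+(h(z))$.

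Finally I would invoke Lemma~\ref{lem:flat}, which gives $b^++b^-\equiv\text{const}$, so that the level sets of $b^+$ and of $b^-$ coincide and the horosphere w.r.t.\ $\ell$ through any point is unambiguously defined. Consequently, in both cases $y\in h(H_z)$ iff $y=h(x)$ with $b^+(x)=b^+(z)$ iff the appropriate Busemann function takes at $y$ the same value it takes at $h(z)$, and hence $h(H_z)=H_{h(z)}$. The only potential subtlety is the sign $\epsilon$; since Busemann flatness absorbs it completely, I do not expect any essential obstacle in the argument.
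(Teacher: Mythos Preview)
Your argument is correct and shares the paper's core idea: use the similarity $|h(x)h(y)|=\lambda|xy|$ together with the action of $h$ on the parameterized line $\ell$ to push the Busemann limit through $h$. Two differences are worth noting. First, your case $\epsilon=-1$ cannot actually occur: each s-inversion $\phi_i$ in the decomposition $h=\phi_2\circ\phi_1$ preserves the circle $\sigma=\ell\cup\{\omega\}$, swaps $o$ and $\omega$, and fixes the two points $c(r_i),c(-r_i)\in S_i\cap\sigma$, hence preserves each of the two arcs of $\sigma$ joining $o$ to $\omega$; consequently $h$ preserves the rays $c([0,\infty))$ and $c((-\infty,0])$, forcing $h(c(t))=c(\lambda t)$. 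The paper accordingly writes $h(c(t))=c(\lambda t)$ without further comment and never invokes Lemma~\ref{lem:flat}, so your appeal to Busemann flatness is an unneeded detour. Second, the paper proves only the inclusion $h(H_z)\subset H_{h(z)}$ directly (from $\lim_{t\to\infty}\lambda(|xc(t)|-|zc(t)|)=0$) and then obtains equality by applying the same inclusion to an inverse homothety $h'$ with $h'\circ h=\id$; your Busemann-value identity $b^+(h(x))=\lambda b^+(x)$ yields both inclusions in one stroke, which is a small economy.
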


\begin{proof}
Let $x\in H_z$ and $\lambda$ be the coefficient of $h$. 
Then $\lim\limits_{t\to\infty}(|xc(t)|-|zc(t)|)=0$. 
Multiplying by $\lambda$, we have $\lim\limits_{t\to\infty}\lambda(|xc(t)|-|zc(t)|)=0$. 
It follows that 
$$
\lim\limits_{t\to\infty}(|h(x)h(c(t))|-|h(z)h(c(t))|)=\lim\limits_{t\to\infty}(|h(x)c(\lambda t)|-|h(z)c(\lambda t)|)=0
$$
and thus $h(H_z)\subset H_{h(z)}$. 
On the other hand, for each homothety $h$ we can consider a homothety $h'$ with the same center 
such that $h'\circ h=\id$. 
It means that $h(H_z)=H_{h(z)}$. 
\end{proof}

\subsection{Projection on horospheres}\label{sub:proj}

Here we assume that $X$ is not M\"obius equivalent to 
$\wh\R$, $o,\omega\in X$
and $\ell\subset X_\omega$ is a Ptolemy line through $o$. 

Let 
$H_o\subset X_\omega$
be the horosphere w.r.t. $\ell$ through
$o$. 
We define the projection $\pi_o\colon X_\omega\to H_o$ as follows: 
if $x\in X_\omega$ and $\ell_x$ is the Ptolemy line through
$x$
Busemann parallel to $\ell$  
then $\pi_o(x):=H_o\cap \ell_x$. 

\begin{proposition}\label{pro:proj}
Let $\ell'\neq\ell\subset X_\omega$ be a Ptolemy lines through $o$. 
Then $\pi_o(\ell')$ is a Ptolemy line. 
\end{proposition}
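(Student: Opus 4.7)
The plan is to parameterize $\ell'$ at unit speed by $c'\colon\R\to X_\omega$ with $c'(0)=o$, set $y_s:=\pi_o(c'(s))$, and prove $|y_sy_u|=c\,|s-u|$ for some constant $c>0$. Once this is in hand, $s\mapsto y_s$ is a constant-speed bijection from $\R$ onto $\pi_o(\ell')$; since $|oy_s|\to\infty$ and $X$ is compact with infinitely remote point $\omega$, the set $\pi_o(\ell')\cup\{\omega\}$ is a topological circle. The Ptolemy equality on its quadruples reduces either to an identity among four real numbers (quadruples lying in $\pi_o(\ell')$) or to the triangle equality on three reals (quadruples involving $\omega$), so $\pi_o(\ell')\cup\{\omega\}$ is a Ptolemy circle through $\omega$, making $\pi_o(\ell')$ a Ptolemy line.

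I would begin by checking that $\pi_o|_{\ell'}$ is injective: if $y_s=y_t$ with $s\neq t$, then $c'(s),c'(t)$ share the Busemann-parallel fiber $\ell_{c'(s)}$, so the distinct Ptolemy circles $\ell'\cup\{\omega\}$ and $\ell_{c'(s)}\cup\{\omega\}$ (distinct since otherwise Lemma~\ref{lem:unique_line} would force $\ell'=\ell$) would share the three points $\omega,c'(s),c'(t)$, contradicting Corollary~\ref{cor:ptsirctwopoints}. For the radial distances I would use a homothety $h=h_{\lambda,o}$ centered at $o$: it fixes $o$, preserves $H_o$ setwise (since $H_o$ is the $\ell$-horosphere through the fixed point $o$), carries each Busemann-parallel fiber $\ell_x$ to $\ell_{h(x)}$, and restricts on the Ptolemy circle $\ell'\cup\{\omega\}$ (which passes through $o,\omega$) to $c'(s)\mapsto c'(\lambda s)$. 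These properties combine to give $\pi_o\circ h=h\circ\pi_o$, whence $y_{\lambda s}=h(y_s)$ and $|oy_s|=c\,|s|$ for $s\geq 0$ with $c:=|oy_1|$, positive by injectivity.

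The heart of the proof is translation invariance. Let $b$ be the Busemann function of $\ell$ with $b(o)=0$ and put $\beta(t):=b(c'(t))$. I would form the composite isometry $\tilde\eta_t:=\eta_{-\beta(t),\ell}\circ\eta_{o,c'(t)}$ of $X_\omega$. The shift $\eta_{o,c'(t)}$ acts on $\ell'$ as $c'(s)\mapsto c'(s+t)$ (read off from the defining ultrafilter limit of homotheties, each stage preserving the Ptolemy circle $\ell'\cup\{\omega\}$); by Lemmas~\ref{lem:shift_busemann_parallel} and~\ref{lem:busparallel_sublinear} it carries $\ell$ to $\ell_{c'(t)}$ and hence sends the foliation by $\ell$-fibers to itself with $\ell_{c'(s)}\mapsto\ell_{c'(s+t)}$; and it shifts $b$ by $+\beta(t)$, the constant being pinned down by $\eta_{o,c'(t)}(o)=c'(t)$. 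The factor $\eta_{-\beta(t),\ell}$ preserves each fiber $\ell_x$ and shifts $b$ by $-\beta(t)$. Tracking $y_s\in H_o\cap\ell_{c'(s)}$ through the composition yields $\tilde\eta_t(y_s)=y_{s+t}$, so by isometry $|y_{s+t}y_{u+t}|=|y_sy_u|$; combining with the radial formula (the choice $u=0,t=-s$ also extends that formula to $s\leq 0$) forces $|y_sy_u|=c\,|s-u|$. The main obstacle is this simultaneous bookkeeping of Busemann-parallel fibers and Busemann heights through the two shifts; once it is carried out, the conclusion from the first paragraph is immediate.
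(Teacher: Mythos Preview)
Your argument is correct and complete in outline, but it takes a genuinely different route from the paper. The paper's proof is a one-step homothety argument: for any three points $x_1,x_2,x_3$ on $\ell'$, it applies a homothety centered at $x_1$ (not at $o$) with coefficient $|x_1x_3|/|x_1x_2|$. This homothety preserves $\ell'$, sends $x_2$ to $x_3$, and---crucially---preserves the horosphere $H_1$ through its own center $x_1$, so it carries $H_1\cap\ell_{x_2}$ to $H_1\cap\ell_{x_3}$. Combined with Lemma~\ref{lem:3eq} (distances between parallel fibers are the same on every horosphere), this gives $|\pi_o(x_1)\pi_o(x_j)|/|x_1x_j|$ independent of $j$ immediately, with no translation machinery at all.

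Your approach instead splits the problem: homotheties at $o$ handle the radial case $|oy_s|=c|s|$, and then you manufacture a translation $\tilde\eta_t=\eta_{-\beta(t),\ell}\circ\eta_{o,c'(t)}$ to propagate this to arbitrary pairs. This works, and it has the side benefit of producing an explicit transitive isometry group on $\pi_o(\ell')$, but it is heavier: you must track both the fiber foliation and the Busemann height through the shift $\eta_{o,c'(t)}$. One point you should tighten: you assert $b\circ\eta_{o,c'(t)}=b+\beta(t)$, but the evaluation at $o$ pins down only the additive constant, not the sign. The sign is $+1$, but this needs an argument---e.g.\ compute at the finite stages using the horosphere-preservation lemma for homotheties centered at $o$ and at $c'(t)$ (the latter lies on $\ell_{c'(t)}$, which shares $b$ with $\ell$), giving $b\circ(h'_n\circ h_n)=b+(1-\lambda_n)\beta(t)\to b+\beta(t)$. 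With that filled in, your proof stands; the paper's version simply avoids the issue by never leaving the realm of homotheties.
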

\begin{proof}
We prove that there exists $\alpha>0$ such that 
$|\pi_o(c'(t))\pi_o(c'(t'))|=\alpha|t-t'|$ for all $t,t'\in \R$, 
where 
$c':\R\to X_\omega$ 
is a unit speed parameterizations of $\ell'$
with
$c'(0)=o$. 
% Without loss of generality we can assume that $o=\ell\cap \ell'$. 
Let $z=c'(1)$, $z'=\pi_o(z)$ and $\alpha:=|oz'|/|oz|$. 

\begin{lemma}\label{lem:homogen}
Let $x_i=c'(t_i)$, $i=1,2,3$, where $t_1<t_2<t_3$. 
Then 
$$
\frac{|\pi_o(x_1)\pi_o(x_2)|}{|x_1x_2|}=\frac{|\pi_o(x_2)\pi_o(x_3)|}{|x_2x_3|}=\frac{|\pi_o(x_1)\pi_o(x_3)|}{|x_1x_3|}.
$$ 
\end{lemma}

\begin{proof}
Let $x_i\in \ell_i$, where $\ell$ and $\ell_i$ are Busemann parallel, 
and let $x_i\in H_i$, where $H_i$ is the horosphere of $\ell_i$, $i=1,2,3$.
 
Note that the homothety 
$h_1\colon X_\omega\to X_\omega$ with the center $x_1$ 
and the coefficient $|x_1x_3|/|x_1x_2|$ moves $x_2$ to $x_3$,
and 
$h_1(H_1)=H_1$.
It follows that $h_1(\ell_2)=\ell_3$. 
So if $y_2=H_1\cap \ell_2$ and $y_3=H_1\cap \ell_3$ then $h_1(y_2)=y_3$. 
Thus $|x_1y_3|/|x_1y_2|=|x_1x_3|/|x_1x_2|$. 
On the other hand $|x_1y_3|=|\pi_o(x_1)\pi_o(x_3)|$ and $|x_1y_2|=|\pi_o(x_1)\pi_o(x_2)|$.
It follows that 
$$
\frac{|\pi_o(x_1)\pi_o(x_2)|}{|x_1x_2|}=\frac{|\pi_o(x_1)\pi_o(x_3)|}{|x_1x_3|}. 
$$
In the same way considering the homothety $h_3$ with the center $x_3$ and the coefficient $|x_1x_3|/|x_2x_3|$ 
we obtain that 
$$
\frac{|\pi_o(x_2)\pi_o(x_3)|}{|x_2x_3|}=\frac{|\pi_o(x_1)\pi_o(x_3)|}{|x_1x_3|}. 
$$
\end{proof}

Now it follows from Lemma~\ref{lem:homogen} that 
$|\pi_o(c'(t))\pi_o(c'(t'))|=\alpha|t-t'|$ for all $t,t'\in \R$. 
\end{proof}

\subsection{Horospheres invariance}

Let $H_o\subset X_\omega$ be the horosphere through $o$ 
w.r.t. some Ptolemy line $\ell\subset X_\omega$. 

\begin{proposition}\label{pro:horospere_EI}
The subspace $X^1=H_o\cup\{\omega\}$ is a compact Ptolemy space with properties (E) and (sI). 
\qed
\end{proposition}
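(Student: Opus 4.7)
The plan is to verify the four defining conditions of $X^1 = H_o \cup \{\omega\}$ in turn: compactness, the Ptolemy property, (E), and (sI).

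\textbf{Compactness and Ptolemy.} The Busemann function $b^+ \colon X_\omega \to \R$ is $1$-Lipschitz and hence continuous, so $H_o = (b^+)^{-1}(0)$ is closed in $X_\omega$. A sequence in $H_o$ that converges in $X$ can accumulate only at $\omega$ or at a point of $H_o$, so $X^1$ is closed in the compact space $X$, hence compact. The Ptolemy property on cross-ratio triples is inherited by the M\"obius subspace.

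\textbf{Property (E).} I would apply Lemma~\ref{lem:nonparallellines} with the intersection point chosen to be $o$, yielding two distinct Ptolemy lines $\ell, \ell' \subset X_\omega$ through $o$. By Proposition~\ref{pro:proj}, the projection $\pi_o(\ell') \subset H_o$ is isometric to $\R$ in the restricted metric, so $\pi_o(\ell') \cup \{\omega\}$ is a Ptolemy circle in $X^1$.

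\textbf{Property (sI).} Given distinct $p, q \in X^1$ and a metric sphere $S^1 \subset X^1$ between them, my plan is to restrict an ambient s-inversion. Let $S \subset X$ be the metric sphere of $X$ satisfying $S \cap X^1 = S^1$, and let $\phi \colon X \to X$ be the s-inversion w.r.t.\ $p, q, S$ furnished by (sI) for $X$. The restriction $\phi|_{X^1}$ will be the required s-inversion of $X^1$ provided $\phi(X^1) = X^1$, equivalently $\phi(H_o) = H_o$; this is the main obstacle. My strategy is a commutation argument with the symmetry $\psi = \phi_\infty$ with respect to $H_o$ constructed in Section~4: $\psi$ is a M\"obius involution of $X$ whose fixed set is exactly $X^1$ and which restricts to an isometry of $X_\omega$. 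Since $\psi$ fixes $p$, $q$, and $\omega$, a brief direct calculation (using the m-inversion formula in case $q \neq \omega$) shows that $\psi$ is an isometry of the metric on $X$ used to define $S$, so $\psi$ preserves $S$ setwise. Consequently $\psi \phi \psi$ is another s-inversion with respect to $(p, q, S)$: it swaps $p, q$, preserves each Ptolemy circle through them, and fixes $S$ pointwise since $\psi\phi\psi(y) = \psi\psi(y) = y$ for $y \in S$. The decisive step is the uniqueness conclusion $\psi \phi \psi = \phi$, which yields $\phi\psi = \psi\phi$ and hence that $\phi$ preserves the fixed set $X^1$ of $\psi$. I would prove this uniqueness by observing that $\rho = \phi \circ (\psi\phi\psi)^{-1}$ is a M\"obius automorphism fixing $p, q$ and $S$ pointwise; on each Ptolemy circle $\sigma$ through $p, q$ it fixes four distinct points ($p$, $q$, and the two points of $\sigma \cap S$), so $\rho|_\sigma = \mathrm{id}$. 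Propagating this from the family of Ptolemy circles through $p, q$ to all of $X$---via the rigidity of M\"obius maps (Lemma~\ref{lem:nondegenerate_morphism}) combined with shifts and homotheties centered at $p, q$---is the technically most delicate portion of the argument.
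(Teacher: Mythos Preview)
Your treatment of compactness, the Ptolemy property, and (E) is correct and agrees with the paper.

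For (sI) you take a genuinely different route, and it contains a real gap. The paper does not invoke the horosphere symmetry $\psi=\phi_\infty$ here at all; it verifies $\phi(H_o)\subset H_o$ by a direct Busemann computation. For an s-inversion $\phi$ centred at a point of $H_o$ and $\omega$ one has $\phi(c(1/t))=c(t)$ along $\ell$, and Lemma~\ref{lem:inv_2} gives
\[
|\phi(z)\,c(t)|=\frac{t\,|z\,c(1/t)|}{|oz|}.
\]
The two-sided estimate $|oz|^2\le |z\,c(1/t)|^2\le |oz|^2+1/t^2$, which is exactly inequality~\eqref{diamond} of Lemma~\ref{lem:3eq}, then squeezes $|\phi(z)\,c(t)|-t$ to~$0$ as $t\to\infty$, so $b(\phi(z))=0$. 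This is short, elementary, and uses only material already in place.

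Your commutation argument, by contrast, rests on the uniqueness of s-inversions: you need $\psi\phi\psi=\phi$, which you reduce to showing that $\rho=\phi\circ(\psi\phi\psi)$ is the identity. You correctly obtain $\rho|_\sigma=\id$ for every Ptolemy circle $\sigma$ through $p,q$, but the ``propagation to all of $X$'' is where the argument breaks. Ptolemy circles through two given points need \emph{not} cover $X$---this is precisely what separates the present hypotheses from the three-point condition characterising $\wh\R^n$ in~\cite{FS}---so knowing $\rho$ on that family says nothing about $\rho$ on the complement. Neither Lemma~\ref{lem:nondegenerate_morphism} nor the available shifts and homotheties bridge this: shifts and homotheties based at $p$ or $q$ permute the circles through $p,q$ among themselves and do not reach points outside their union. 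Uniqueness of s-inversions is nowhere assumed or proved in the paper, and establishing it at this stage would be a substantial independent result. The paper's direct computation sidesteps the issue entirely; you should replace the commutation strategy with that calculation.
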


\begin{proof}
Let  $\phi$  be an s-inversion w.r.t. $o, \omega\in X$. 
Note that $\phi(X^1)=X^1$. 
Indeed, 
let $z\in H_o\cup\{\omega\}$, 
and let $c:\R\to X_\omega$
be a unit speed parameterization of $\ell$ such that $c(0)=o$. 
Consider the Busemann function 
$b\colon X_\omega\to \R$ 
of
$\ell$
such that $b\circ c(t)=-t$. 
Then $b(z)=0$.  
On the other hand, if $z'=\phi(z)$ then
$$
|z'c(t)|=\frac{|zc(1/t)|}{\frac1t\cdot|xz|}=\frac{t|zc(1/t)|}{|xz|}.
$$ 
Then 
$$
b(z')=
\lim\limits_{t\to \infty}(|z'c(t)|-t)=
\lim\limits_{t\to \infty}(t|zc(1/t)|/|xz|-t).
$$
Note that by \eqref{diamond}, we have
$$
|zx|^2\leq |zc(1/t)|^2\leq |zx|^2+1/t^2.
$$
Then 
$$
0\leq t|zc(1/t)|/|xz|-t\leq \sqrt{t^2+1/|zx|^2}-t. 
$$
Thus 
$b(z')=\lim\limits_{t\to \infty}(t|zc(1/t)|/|xz|-t)=0$. 

It follows that for any $x,y\in X^1$ and any s-inversion $\phi_{x,y}$ w.r.t. $x,y$ 
$\phi_{x,y}(X^1)=X^1$. 

Let $x,y\in X^1$ and $S'\subset X^1$ be a metric sphere 
between
$x$ and $y$ in $X^1$. 
Note that $S'=S\cap X^1$, where $S\subset X$ is a metric sphere 
between
$x$ and $y$ in $X$. 
We define an s-inversion $\phi'_{x,y,S'}\colon X^1\to X^1$
w.r.t. 
$x, y\in X^1$
and a metric sphere 
$S'\subset X^1$
between $x$ and $y$ 
as a restriction of 
an s-inversion $\phi_{x,y,S}\colon X\to X$
w.r.t. 
$x, y\in X$
and a metric sphere 
$S\subset X$ to $X^1$. 
It follows that $X^1$ has the property
(sI). 

On the other hand by Lemma~\ref{lem:nonparallellines} 
there exists a Ptolemy line $\ell'\neq \ell$ through $o$.   
By Proposition~\ref{pro:proj} $\pi_o(\ell')$ is a Ptolemy line in $H_o$
and  then $X^1$ has the property
(E). 
\end{proof}

\subsection{Coordinates in $X_\omega$}

From now one we fix $o,\omega\in X$ and consider a metric space $X_\omega$. 
Consider a Ptolemy line $\ell_0$ through $o$ with a unit speed parameterization
$c_0\colon \R\to X_\omega$, $c_0(0)=o$. 
Let $H_o$ be the horosphere w.r.t. $\ell_0$
through
$o$, $b_0:X_\omega\to\R$
the Busemann function of
$\ell_0$
with
$b_0(o)=0$. 
For each $z\in H_o$ 
denote by 
$\ell_z$
the line Busemann parallel to $\ell_0$ through $z$ 
and consider the unit speed parameterization
$c_z:\R\to X_\omega$ of $\ell_z$ such that $b_0\circ c_0(t)=-t=b_0\circ c_z(t)$. 
From Lemma~\ref{lem:unique_line}, Corollary~\ref{cor:busparallel_foliation} and Lemma~\ref{lem:flat}
we have that the map 
$i_1\colon \ell_0\times H_o\to X_\omega$ such that $i_1(t,z)=c_z(t)$ is a bijection. 

Take 
$x_0\in \ell_0$ 
with
$|ox_0|=1$. 
Recall that $|zx_0|\geq |ox_0|=1$ for each $z\in H_o$. 
By Proposition~\ref{pro:horospere_EI}, we have that  
$X^1=H_o\cup\{\omega\}$  is a compact Ptolemy space with properties (E) and (sI).
 
Arguing by induction we obtain
a sequence  
$$
\ldots\subset X^k\subset \ldots\subset X^1\subset X^0=X
$$
of  compact Ptolemy spaces with properties (E) and (sI) 
and a sequence of points $x_i\in X^i\setminus X^{i+1}$, 
where $|x_io|=1$. 
Moreover 
$|x_ix_k|\geq 1$
for 
$i\neq k$. 
Since the ball 
$B_1(o)=\set{x\in X}{$|xo|\leq 1$}$ 
is compact, the sequence
$\{x_i\}$
is finite and thus there exists 
$N\in\N$ 
such that 
$X^N$ is M\"obius equivalent to $\wh\R$. 
Then 
$$
\wh\R=X^N\subset \ldots\subset X^1\subset X^0=X. 
$$
It follows that there is a bijection 
$$
i\colon \ell_0\times\ell_1\times\ldots\times\ell_N\to X_\omega. 
$$
This bijection induces on $X_\omega$ a structure of the vector space $\R^{N+1}$. 
It means that we can sum up different points and multiply them by real numbers. 
Note that $o$ plays the role of a neutral element.  

Let 
$b_i:X_\omega\to\R$, $i=1,\ldots,N$,
be a  Busemann function of
$\ell_i$
with
$b_i(o)=0$. 
Then
$b_i$ 
is the i-th coordinate function. Moreover, if 
$H_i(x)$ is the horosphere w.r.t. $b_i$ through $x$ 
then $x=\bigcap\limits^N_{i=0} H_i(x)$. 
Denote by $x(i)$ the vector with coordinates $(0,\ldots,b_i(x),\ldots,0)$, where $b_i(x)$ appears at the i-th place.
Note that $x=x_0+\ldots+x_N$.  

Let 
$T^i_x\colon X_\omega\to X_\omega$ 
be the 
$b_i(x)$-shift along $\ell_i$, 
and let $T_x\colon X_\omega\to X_\omega$ be defined by $T_x(y)=x+y$, for each $y\in X_\omega$. 
Note that $T_{x(i)}=T^i_x$ and then $T_x=T^N_x\circ\ldots\circ T^0_x$. 
It follows that $T_x$ is an isometry. 

If $h_k$ is the homothety with the center $o$ and the coefficient $k$ 
then $h_k(x)=kx$, where $k>0$. 
Indeed, note that $h_k(x(i))=kx(i)$. 
Moreover, 
$$
h_k(H_i(x))=h_k(H_i(x(i))=H_i(kx(i))=H_i(kx)
$$ 
and 
$$
h_k(x)=h_k(\bigcap\limits^N_{i=0} H_i(x))=\bigcap\limits^N_{i=0} H_i(kx)=kx.
$$
It follows that $|o(kx)|=k|ox|$, where $k>0$. 

Let $\nu\colon X_\omega\to \R_+$  be defined by $\nu(x)=|ox|$. 
We prove that $\nu$ is a norm on $X_\omega$. 
Indeed, if $\nu(x)=0$  then $|ox|=0$ and $x=o$. 
Moreover, 
\begin{multline*}
\nu(x+y)=|o(x+y)|\leq |ox|+|x(x+y)|=|ox|+|T_x(o)T_x(y)|\\
=|ox|+|oy|=\nu(x)+\nu(y). 
\end{multline*}
Finally, note that $\nu(-x)=|o(-x)|=|T_x(o)T_x(-x)|=|xo|=\nu(x)$. 
So if $k\geq 0$ then $\nu(kx)=|o(kx)|=k|ox|=k\nu(x)$. 
If $k<0$ then $\nu(kx)=|o(kx)|=|o(|k|(-x))|=|k||o(-x)|=|k|\nu(-x)=|k|\nu(x)$.

Also we  note that $\nu(\cdot)$ induces the metric $X_\omega$. 
Indeed, $|xy|=|T_x(o)T_x(y-x)|=\nu(y-x)$. 
Applying the Schoenberg theorem, see Theorem~\ref{thm:schoenberg}, 
we obtain that $X$ is M\"obius equivalent to 
$\wh\R^N$. 
\qed


\begin{thebibliography}{ABCDE}
 
\bibitem[BS1]{BS1} S.~Buyalo, V.~Schroeder, Moebius structures and Ptolemy spaces: 
boundary at infinity of complex hyperbolic spaces, arXiv:1012.1699, preprint. 

\bibitem[BS2]{BS2} S.~Buyalo, V.~Schroeder, Moebius characterization of the 
boundary at infinity of rank one symmetric spaces, arXiv:1211.3237, preprint.

\bibitem[FS]{FS} Th.~Foertsch and V.~Schroeder, Metric M\"obius geometry
and a characterization of spheres, manuscripta mathematica 2012, 
DOI:10.1007/s00229-012-0555-0.



\bibitem[Sch]{Sch} I.~Schoenberg, A remark on M. M. Day's characterization 
of inner-product spaces and a conjecture of L. M. Blumenthal,  
Proc. Amer. Math. Soc. 3 (1952) 961--964.

\end{thebibliography}
\end{document}